\def\dis{\displaystyle}
\newtheorem{thm}{Theorem}[section]
\newtheorem{lem}[thm]{Lemma}
\theoremstyle{definition}
\newtheorem{defn}[thm]{Definition}
\newtheorem{rem}[thm]{Remark}
\def\R{\mathbb{R}}
\def\P{\mathbb P}
\def\dt{\textup{d}}
\def\dt{\textup{d}}
\journal{00}
\begin{document}
\selectlanguage{english}
\begin{frontmatter}

\title{The long-time behaviour of a stochastic SIR epidemic model with distributed delay and multidimensional  Lévy jumps}
\author{Driss Kiouach\footnote{Corresponding author.\\
\hspace*{0.4cm}E-mail addresses: \href{d.kiouach@uiz.ac.ma}{d.kiouach@uiz.ac.ma} (D. Kiouach), \href{yassine.sabbar@usmba.ac.ma}{yassine.sabbar@usmba.ac.ma} (Y. Sabbar).} and Yassine Sabbar}
\address{LPAIS Laboratory, Faculty of Sciences Dhar El Mahraz, Sidi Mohamed Ben Abdellah University, Fez, Morocco.}   
\vspace*{1cm}

\begin{abstract}
Recently\textcolor{black}{,} emerging epidemics like COVID-19 and its variants require predictive mathematical models to implement suitable responses in order to limit their negative and profound impact on society. The SIR (Susceptible-Infected-Removed) system is a straightforward mathematical formulation to model the dissemination of many infectious \textcolor{black}{diseases}. The present paper reports novel theoretical and analytical results for a perturbed version of an SIR model with Gamma-distributed delay. Notably, our epidemic model is represented by Itô-Lévy stochastic differential equations in order to simulate sudden and unexpected external phenomena. By using some new and ameliorated mathematical approaches, we study the long-run characteristics of the perturbed delayed model. Within this scope, we give sufficient conditions for two interesting asymptotic proprieties: extinction and persistence of the epidemic. One of the most interesting results is that the dynamics of the stochastic model are closely related to the intensities of white \textcolor{black}{noises} and Lévy jumps, which can give us a good insight into the evolution of the epidemic in some unexpected situations. Our work complements the results of some previous investigations and provides a new approach to predict and analyze the dynamic behavior of epidemics with distributed delay. For illustrative purposes, numerical examples are presented for checking the theoretical study. \vskip 2mm

\textbf{Keywords:} Distributed delay; Epidemic model; White noise; Lévy jumps; Extinction; Persistence in the mean.

\textbf{Mathematics Subject Classification 2020}: 60H10; 34A12; 34A26; 37C10; 60H30;  92D30.
\end{abstract}
\end{frontmatter}
\
\section{Introduction}
Epidemic models under various formal frameworks are an auxiliary tool to acquire information about the dynamics of epidemic transmission and the impact of different intervention strategies \cite{1}. Recently, the employment  of these models to generate long-term epidemic forecasts is increased with the rising number of emerging and re-emerging epidemic outbreaks. For example, COVID-19 and its new variant identified in England present an urgent and serious challenge due to their contagious nature and frequently changing characteristics \cite{covid,Q1,Q2,salih2,salih3}. A mathematical model for measuring and preventing the continued spread of COVID-19 is firmly required to understand the mechanisms of its dynamic and predict its future. Considerable attention has been paid to the analysis of susceptible-infectious-removed (SIR) type model, which is proposed to describe the dissemination of COVID-19 before suggesting more adapted and complex epidemic models. In the SIR model, we often \textcolor{black}{assume} that recovered individuals can get continuous immunity \cite{2}. Many studies have paid close attention to the characteristics of the long-term epidemics immune response \cite{3,4,5,yan4}. To confer the realistic aspect of the epidemic model and make it biologically reasonable, numerous scholars considered the SIR epidemic model with time delay because an individual may not be infectious until some time after becoming infected \cite{delay1,2}. In the \textcolor{black}{above-mentioned} works, the time delay is assumed to be single-valued. The constant delay may be considered if the variation of the time is known exactly, which is not real for many biological \textcolor{black}{reasons} \cite{d1}. Considering the variable infectivity in the time interval yields a model with a distributed delay \cite{d2}. Therefore, it is more realistic to introduce a continuously distributed delay in the biological modeling \cite{d3,d4}. Analyzing the characteristics of the SIR model with a distributed time delay still a rich subject that may deliver new comprehension of \textcolor{black}{the epidemics propagation} which motivates our work. According to the approach of Muroya et al. \cite{m1}, we consider the delay kernel $\mathcal{G}:[0,\infty)\to[0,\infty)$ as a normalized $\mathbf{L}^1$-function, \textcolor{black}{i.e,} $\int_0^{\infty}\mathcal{G}(s)\textup{d} s=1$.  The average delay for the kernel $\mathcal{G}$ can be presented by the following quantity $\int_0^{\infty}s\mathcal{G}(s)\textup{d} s<\infty$. Hence, \textcolor{black}{the incidence rate} at time $\tau$ can be presented as the following form: $\beta S(\tau)\int_{-\infty}^{\tau}\mathcal{G}(\tau-s)I(s)\textup{d} s$, where $\beta$ denotes the transmission rate, $S ( t )$ and $I ( t )$ represent the fractions of susceptible and infective individuals at time $t$. The SIR epidemic model with distributed delay can be expressed as follows \cite{da}:
 \begin{flalign}
&\begin{cases}
\dis\textup{d} S(t)=\left(A-\mu_1 S(t)-\beta S(t)\int_{-\infty}^t\mathcal{G}(t-s)I(s)\textup{d} s \right)\textup{d} t,\\
\dis\textup{d} I(t)=\left(\beta S(t)\int_{-\infty}^t\mathcal{G}(t-s)I(s)\textup{d} s-(\mu_2+\gamma)I(t)\right)\textup{d} t,\\
\textup{d} R(t)=\big(\gamma I(t)-\mu_3 R(t)\big)\textup{d} t,
\end{cases}&
\label{s1}
\end{flalign}
where $R ( t )$ is the fraction number of recovered populations at time $t$. The remaining parameters appearing in this system are described as follows:
\begin{itemize}
\item[$\bullet$] $A$ is the recruitment rate of susceptible individuals corresponding to births and immigration.
\item[$\bullet$] $\mu_1$, $\mu_3$ are the natural death rates \textcolor{black}{associated respectively to the susceptible and recovered populations,} $\mu_2$ is a general mortality rate including the effect of the disease fatality.
\item[$\bullet$] $\gamma$ is the rate of individuals leaving $I$ to $R$ (recovered rate).
\end{itemize}

The threshold number of the deterministic system (\ref{s1}) is $\dis\mathcal{T}^{\star}=\frac{\beta A}{\mu_1(\mu_2+\gamma)}$  which determines the persistence ($\mathcal{T}^{\star}>1$) or the extinction ($\mathcal{T}^{\star}<1$) of the epidemic. Many studies showed that the deterministic epidemic model (\ref{s1}) is suitable to describe the transmission process of some known epidemics such as Rubella, Whooping cough, Measles and Smallpox. Due to many biological and mathematical considerations \cite{da}, in this paper, we consider the delay kernel with Gamma distribution $\dis\mathcal{G}(s)=\frac{s^n\eta^{n+1}e^{-\eta s}}{n!},\hspace*{0.3cm}s\in(0,\infty)$, where the constant $\eta>0$ is the rate of exponential fading memory, which means the retrogradation of the past memories effect. In this paper, we consider the low kernel function $\mathcal{G}$ with $n=0$. By letting  $\mathcal{D}(t)=\int_{-\infty}^t\eta e^{-\eta(t-s)}I(s)\textup{d}s$ and using the linear chain approach, system (\ref{s1}) can be transformed into the following equivalent system:
\begin{flalign}
&\begin{cases}
\textup{d} S(t)=\big(A-\mu_1 S(t)-\beta S(t)\mathcal{D}(t)\big)\textup{d} t,\\
\textup{d} I(t)=\big(\beta S(t)\mathcal{D}(t)-(\mu_2+\gamma)I(t)\big)\textup{d} t,\\
\textup{d} R(t)=\big(\gamma I(t)-\mu_3 R(t)\big)\textup{d} t,\\
\textup{d}\mathcal{D}(t)=\eta\big(I(t)-\mathcal{D}(t)\big)\textup{d} t.
\end{cases}&
\label{s1d}
\end{flalign}

Although the use of deterministic models can explain and simulate some phenomena in real life, such models do not actually consider the effect of the natural stochasticity, and we plainly wish to learn how randomness affects our epidemic models \cite{yan1,yan2,yan3,ls2,salih1,mp11,mp12}. Generally, one of the ordinary extensions from the deterministic SIR model to the stochastic version is to incorporate environmental white \textcolor{black}{noises, which appear} from an almost continuous series of small variations on the \textcolor{black}{model} parameters \cite{s1,sis30,sis31,ls,ls1,mp,ls3,ls4}. Therefore, the stochastic delayed SIR epidemic can be an accurate tool to predict the long-run dynamics of infectious epidemics \cite{s2,s3,51,52,53,54,55,zahri1,zahri2}. In \cite{da}, the authors inserted the stochastic perturbation in the model (\ref{s1}) by assuming that the white noise is directly proportional to the variable $S$ and  they obtained the following stochastic system:
  \begin{flalign}
&\begin{cases}
\textup{d} S(t)=\big(A-\mu_1 S(t)-\beta S(t)\mathcal{D}(t)\big)\textup{d} t+\sigma S(t)\textup{d} \mathcal{W}(t),\\
\textup{d} I(t)=\big(\beta S(t)\mathcal{D}(t)-(\mu_2+\gamma)I(t)\big)\textup{d} t,\\
\textup{d} R(t)=\big(\gamma I(t)-\mu_3 R(t)\big)\textup{d} t,\\
\textup{d}\mathcal{D}(t)=\eta\big(I(t)-\mathcal{D}(t)\big)\textup{d} t.
\end{cases}&
\label{s12}
\end{flalign}
where $\mathcal{W}(t)$ is a standard Brownian motion with associated intensity $\sigma>0$. Specifically, they proved the existence and uniqueness of an ergodic stationary distribution to the model (\ref{s12}). Then, they established sufficient conditions for the extinction of a disease \textcolor{black}{that spreads according to this model}. On the basis of these findings, one question was catches our attention. It is possible to develop and generalize the stochastic model proposed in \cite{da}?. So, the objective of this work is to expound on this problem and provide a suitable analytical context. Specifically, we aim to describe the strong fluctuations by considering a general version of the dynamical model (\ref{s12}). It is clear that the population systems may suffer certain sudden environmental catastrophes, such as earthquakes, floods, droughts, etc \cite{sis11,sis12,sis14}. For example, the recent massive explosion in the port city of Beirut. The impact of this unexpected disaster has been extremely devastating, especially when it has occurred simultaneously with the \textcolor{black}{ COVID-19} pandemic. This led to a sudden worsening of the health situation and a jump increase in the number of deaths. Mathematically, we \textcolor{black}{use} the Lévy process to describe the phenomena that cause a big jump to occur occasionally \cite{lsbook,sadik1,sadik2,56,57}. By considering this type of random perturbations, the model \eqref{s1d} becomes the following system of stochastic differential equations with Lévy jumps (SDE-Js for short):
\begin{flalign}
&\begin{cases}
\dis\dt S(t)=\big(A-\mu_1 S(t)-\beta S(t)\mathcal{D}(t)\big)\textup{d} t+ \sigma_1 S(t) \dt \mathcal{W}_1(t)+\int_{\mathcal{U}} \lambda_1(u)S(t^{-})\widetilde{\mathcal{N}}(\textcolor{black}{\textup{d}t},\textup{d}u),\\[6pt]
\dis\dt I(t)=\big(\beta S(t)\mathcal{D}(t)-(\mu_2+\gamma)I(t)\big)\textup{d} t+ \sigma_2 I(t) \dt \mathcal{W}_2(t)+\int_{\mathcal{U}} \lambda_2(u)I(t^{-})\widetilde{\mathcal{N}}(\textup{d}t,\textup{d}u),\\
\dis\dt R(t)=\big(\gamma I(t)-\mu_3 R(t)\big)\textup{d} t+ \sigma_3 R(t) \dt \mathcal{W}_3(t)+\int_{\mathcal{U}} \lambda_3(u)R(t^{-})\widetilde{\mathcal{N}}(\textcolor{black}{\textup{d}t},\textup{d}u),\\
\dis\dt \mathcal{D}(t)=\eta\big(I(t)-\mathcal{D}(t)\big)\textup{d} t+\sigma_4 \mathcal{D}(t) \dt \mathcal{W}_4(t)+\int_{\mathcal{U}} \lambda_4(u)\mathcal{D}(t^{-})\widetilde{\mathcal{N}}(\textcolor{black}{\textup{d}t},\textup{d}u),
\end{cases}&
\label{s13}
\end{flalign}
where $S(t^{-})$, $I(t^{-})$, $R(t^{-})$ and $\mathcal{D}(t^{-})$ are the left limits of $S(t)$, $I(t)$, $R(t)$ and $\mathcal{D}(t)$, respectively. $\mathcal{W}_{i}(t)$ $(i = 1, 2,3,4)$ are independent Brownian motions and $\sigma_{i}>0$ $(i = 1, 2,3,4)$ are their intensities. $\mathcal{N}$ is a Poisson counting measure with compensating  martingale $\widetilde{\mathcal{N}}$ and characteristic measure $\nu$ on a measurable subset $\mathcal{U}$ of $(0,\infty)$ satisfying $\nu(\mathcal{U})<\infty$. $\mathcal{W}_{i}(t)$ $(i = 1, 2,3,4)$ are independent of $\mathcal{N}$. We assumed that $\nu$ is a Lévy measure such that $\widetilde{\mathcal{N}}(\textcolor{black}{\textup{d}t},\textup{d}u)=\mathcal{N}(\textcolor{black}{\textup{d}t},\textup{d}u)-\nu(\textup{d}u)\textcolor{black}{\textup{d}t}$ and we suppose that the function $\lambda_i: Z\times\Omega\to\R$ is bounded and continuous. Throughout this paper, we let $(\Omega,\mathcal{F},\P)$ denotes a complete probability space with a filtration $\{\mathcal{F}_t\}_{t\geq 0}$ satisfying these conditions: right continuous and $\mathcal{F}_0$ contains all $\P$-null sets. We also assume that $\mathcal{W}_i(t)$ \textcolor{black}{is} defined on this probability space. Since \textcolor{black}{the compartment }$R(t)$ does not appear in the equations of  $S(t)$ $I(t)$ and $\mathcal{D}(t)$, it is sufficient to analyze the dynamic behavior of the following SDE-J model: 
\begin{flalign}
&\begin{cases}
\dis\dt S(t)=\big(A-\mu_1 S(t)-\beta S(t)\mathcal{D}(t)\big)\textup{d} t+ \sigma_1 S(t) \dt \mathcal{W}_1(t)+\int_{\mathcal{U}} \lambda_1(u)S(t^{-})\widetilde{\mathcal{N}}(\textcolor{black}{\textup{d}t},\textup{d}u),\\[6pt]
\dis\dt I(t)=\big(\beta S(t)\mathcal{D}(t)-(\mu_2+\gamma)I(t)\big)\textup{d} t+ \sigma_2 I(t) \dt \mathcal{W}_2(t)+\int_{\mathcal{U}} \lambda_2(u)I(t^{-})\widetilde{\mathcal{N}}(\textup{d}t,\textup{d}u),\\
\dis\dt \mathcal{D}(t)=\eta\big(I(t)-\mathcal{D}(t)\big)\textup{d} t+\sigma_4 \mathcal{D}(t) \dt \mathcal{W}_4(t)+\int_{\mathcal{U}} \lambda_4(u)\mathcal{D}(t^{-})\widetilde{\mathcal{N}}(\textcolor{black}{\textup{d}t},\textup{d}u).
\end{cases}&
\label{s2}
\end{flalign}

In this study, we develop a \textcolor{black}{new analysis} to deal with stochastic models with jumps in epidemiology. Our main goal is to investigate sufficient conditions of the stochastic extinction and persistence in the mean. These two important properties are sufficient to predict and analyze the dynamics of a given epidemic. We apply a new approach to estimate the values of the averages $\dis t^{-1}\int^t_0\psi(s)\textup{d}s$ and $\dis \textcolor{black}{t^{-1}}\int^t_0\psi^2(s)\textup{d}s$, where $\psi (t)$ is the positive solution of the following subsystem:
\begin{align}
\begin{cases}\dt\psi(t)=\big(A-\mu_1\psi(t)\big)\textcolor{black}{\textup{d}t}+\sigma_1\psi(t)\dt \mathcal{W}_1(t)+\dis\int_{\mathcal{U}}\lambda_1(u)\psi(t^{-})\tilde{\mathcal{N}}(\textcolor{black}{\textup{d}t},\textup{d}u), \hspace{0.5cm}\forall t>0,\\
\dis\psi(0)=S(0)>0.\end{cases}
\label{s3}
\end{align}
Our approach allows us to close the gap left by using the classical method presented for example in \cite{58}. Furthermore, we give an optimal sufficient condition for the stochastic extinction. For the purpose of well understanding the dynamics of the delayed model (\ref{s2}), we give a sufficient condition of the disease persistence. The analysis in this paper seems to be promising to investigate other related stochastic delayed models with Lévy \textcolor{black}{noises} in epidemiology and even in biology.

This work is organized as follows. In section \ref{sec1}, we verify the well-posedness of the stochastic model (\ref{s2}). In section \ref{sec2}, we give sufficient conditions for the extinction and the persistence in the mean of the disease. Finally, in section \ref{sec4}, numerical simulations are carried out to confirm the theoretical study.
\section{\textcolor{black}{Existence and uniqueness of the global positive solution}} \label{sec1}
To study the long-term properties of an infectious disease system, the prime concern is whether the solution is unique, positive and global \textcolor{black}{in time}. In this short section, motivated by the approach presented in \cite{ki}, we show the well-posedness of the stochastic model (\ref{s2}). Mainly, the primary key to treat the said problem is to construct a suitable Lyapunov function. According to some analytical and mathematical reasons, it is necessary that we make the following two hypotheses:
\begin{itemize}
\item \textbf{($\mathcal{H}_1$):} We assume that the jump coefficients $\lambda_i(u)$ in (\ref{s2}) satisfy $\dis\int_{\mathcal{U}} \lambda_i^2(u)\nu(\textup{d} u)<\infty$ $(i=1,2,4)$.
\item \textbf{($\mathcal{H}_2$):} For all $i=1,2,4$, we assume that  $1+\lambda_i(u)>0$  and $\dis\int_{\mathcal{U}}\Big( \lambda_i(u)-\ln \big(1+\lambda_i(u)\big)\Big)\nu(\textup{d} u)<\infty.$
\end{itemize}
By assumption \textbf{($\mathcal{H}_1$)}, the coefficients of the system (\ref{s2}) are locally Lipschitz continuous, then for any initial value $(S(0), I(0), \mathcal{D}(0))\in \R^{3}_{+}$ there is a unique local solution $(S(t), I(t), \mathcal{D}(t))$ on $t\in[0,\tau_{e})$, herein, $\tau_{e}$ represents the explosion time. In the following theorem, our goal is to show that the solution is positive and global. 
\begin{thm}
Let hypotheses \textbf{$\textup{(}\mathcal{H}_1\textup{)}$} and \textbf{$\textup{(}\mathcal{H}_2\textup{)}$} hold. For any initial value $(S(0),I(0),\mathcal{D}(0))\in \R^3_{+}$, there exists a unique positive solution $(S(t),I(t),\mathcal{D}(t))$ of the SDE-J (\ref{s2}) on $t\geq 0$ and the solution will remain in $\R^{3}_{+}$ with probability one, namely $(S(t),I(t),\mathcal{D}(t))\in \R^3_{+}$ for all $t\geq 0$ almost surely (a.s.)\textbf{.} 
\label{thmp}
\end{thm}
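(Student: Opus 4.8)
The plan is to follow the classical localization-plus-Lyapunov scheme adapted to the jump setting. Since hypothesis $(\mathcal{H}_1)$ already guarantees that the coefficients of (\ref{s2}) are locally Lipschitz, a unique local solution exists on $[0,\tau_e)$, and it suffices to prove that the explosion time $\tau_e$ is infinite almost surely. First I would fix $k_0$ so large that $(S(0),I(0),\mathcal{D}(0))$ lies in the cube $[1/k_0,k_0]^3$, and for each integer $k\geq k_0$ define the stopping time
\[
\tau_k=\inf\Big\{t\in[0,\tau_e):\min\{S(t),I(t),\mathcal{D}(t)\}\leq \tfrac1k \ \text{ or }\ \max\{S(t),I(t),\mathcal{D}(t)\}\geq k\Big\}.
\]
The family $(\tau_k)$ is nondecreasing; writing $\tau_\infty=\lim_{k\to\infty}\tau_k$ we have $\tau_\infty\leq\tau_e$, so the whole statement reduces to showing $\tau_\infty=\infty$ a.s. I would argue by contradiction: if this failed there would exist $T>0$ and $\varepsilon\in(0,1)$ with $\mathbb{P}(\tau_k\leq T)\geq\varepsilon$ for all large $k$.

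The heart of the argument is a Lyapunov functional. I would take the $C^2$ function
\[
V(S,I,\mathcal{D})=(S-1-\ln S)+(I-1-\ln I)+(\mathcal{D}-1-\ln\mathcal{D}),
\]
which is nonnegative on $\R^3_+$ because $x-1-\ln x\geq 0$ for every $x>0$, and which blows up whenever any coordinate approaches $0$ or $\infty$. Applying the Itô--Lévy formula to $V$ along (\ref{s2}) and isolating its generator $\mathcal{L}V$, the nonlinear incidence contribution $\pm\beta S\mathcal{D}$ cancels between the $S$- and $I$-lines, the diffusion parts produce only the constants $\tfrac12(\sigma_1^2+\sigma_2^2+\sigma_4^2)$, and each jump term contributes a compensator integral of the form $\int_{\mathcal{U}}\big(\lambda_i(u)-\ln(1+\lambda_i(u))\big)\nu(\mathrm{d}u)$. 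It is precisely here that hypothesis $(\mathcal{H}_2)$ is decisive: it makes $1+\lambda_i>0$, so the logarithms are well defined, and it guarantees that these integrals are finite. Collecting terms, the genuinely negative pieces $-A/S$, $-\beta S\mathcal{D}/I$ and $-\eta I/\mathcal{D}$ are discarded, while the surviving linear terms such as $(\beta-\eta)\mathcal{D}$ and $(\eta-\mu_2-\gamma)I$ are absorbed by the elementary bound $ax\leq b(x-1-\ln x)+c$, valid for suitable constants $b,c$. This yields an estimate of the form $\mathcal{L}V\leq K(1+V)$ with $K$ depending only on the parameters and the above $\nu$-integrals.

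With this differential inequality in hand I would use the Itô--Lévy formula in integral form up to $t\wedge\tau_k$, take expectations so that both the Brownian and the compensated-Poisson martingale parts vanish, and obtain $\mathbb{E}\,V\big(S(t\wedge\tau_k),I(t\wedge\tau_k),\mathcal{D}(t\wedge\tau_k)\big)\leq V(S(0),I(0),\mathcal{D}(0))+K\int_0^t\big(1+\mathbb{E}\,V(s\wedge\tau_k)\big)\mathrm{d}s$. Gronwall's inequality then bounds $\mathbb{E}\,V$ at time $T\wedge\tau_k$ by a constant $M=(V(S(0),I(0),\mathcal{D}(0))+KT)e^{KT}$ that is independent of $k$. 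To close the contradiction, I would observe that on the event $\{\tau_k\leq T\}$ some coordinate of the solution equals $k$ or $1/k$ at time $\tau_k$, so $V$ there is at least $(k-1-\ln k)\wedge(1/k-1+\ln k)$; hence $M\geq \varepsilon\,\big[(k-1-\ln k)\wedge(1/k-1+\ln k)\big]$, whose right-hand side diverges as $k\to\infty$. This forces $\mathbb{P}(\tau_\infty\leq T)=0$ for every $T$, so $\tau_\infty=\infty$ a.s., which simultaneously gives global existence and, by the very definition of $\tau_k$, strict positivity of all three components for all $t\geq 0$. I expect the main technical obstacle to be the careful bookkeeping of the Itô--Lévy generator---verifying the cancellation of the incidence term and keeping the compensator integrals exactly in the form controlled by $(\mathcal{H}_2)$---rather than the localization step, which is routine.
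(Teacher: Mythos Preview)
Your localization--Lyapunov scheme is correct and will close the argument, but it differs in one nontrivial technical choice from the paper's proof. You take the unweighted function $V=(S-1-\ln S)+(I-1-\ln I)+(\mathcal D-1-\ln\mathcal D)$, which leaves residual linear terms $(\beta-\eta)\mathcal D$ and $(\eta-\mu_2-\gamma)I$ in $\mathcal L V$; you then absorb these via $ax\le b(x-1-\ln x)+c$ to obtain $\mathcal L V\le K(1+V)$ and finish with Gronwall. The paper instead introduces two free weights, writing
\[
\mathcal V=(S-m-m\ln(S/m))+(I-1-\ln I)+\tfrac{\mu_2+\gamma}{\eta}(\mathcal D-1-\ln\mathcal D),
\]
and chooses $m=(\mu_2+\gamma)/\beta$ precisely so that the $\mathcal D$-coefficient $(m\beta-(\mu_2+\gamma))$ vanishes while the $(\mu_2+\gamma)/\eta$ weight on the $\mathcal D$-block makes the $\pm(\mu_2+\gamma)I$ terms cancel exactly. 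This yields $\mathcal L\mathcal V\le \text{const}$ outright, so no Gronwall step is needed and the expectation bound is immediate. Your route is slightly more robust (it does not require tuning constants to the model parameters), while the paper's route is a bit cleaner algebraically; both are standard and either is acceptable here.
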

\begin{proof}
We only need to prove that $\tau_{e}=\infty$ almost surely. Let $\epsilon_0>0$ be sufficiently large such that each component of $(S(0), I(0), \mathcal{D}(0))$ all lies in the interval \textcolor{black}{$\dis\Big(\frac{1}{\epsilon_0},\epsilon_0\Big)$}. For each integer $\epsilon\geq \epsilon_0$, we define the following stopping time
\begin{align*}
\tau_{\epsilon}&=\inf \left\lbrace t\in [0,\tau_{e})|\; S(t)\not\in \Big(\frac{1}{\epsilon},\epsilon\Big),\;\textcolor{black}{\mbox{or}}\; I(t)\not\in \Big(\frac{1}{\epsilon},\epsilon\Big),\;\mbox{or}\;\mathcal{D}(t)\not\in \Big(\frac{1}{\epsilon},\epsilon\Big)\right\rbrace.
\end{align*}
Set $\inf \emptyset=\infty$ ($\emptyset$ denotes the  empty set) and let $\tau_{\infty}=\underset{\epsilon\to\infty}{\lim}\tau_{\epsilon}$. Evidently, $\tau_{\epsilon}$ is increasing as $\epsilon\to\infty$. Moreover, $\tau_{\infty}\leq \tau_{e}$. If we can prove that $\tau_{\infty}=\infty$ a.s., then $\tau_{e}=\infty$ and $(S(t),I(t),\mathcal{D}(t))\in \R^{3}_{+}$ for all $t\geq 0$ almost surely. Specifically, we need to show that $\tau_{\infty}=\infty$ a.s. By supposing the opposite, we can consider a pair of positive constants $T>0$ and $k\in(0,1)$ such that $\P\{\tau_{\infty}\leq T\}>k$.
Hence, there exists an integer $\epsilon_1\geq \epsilon_0$ such that
\begin{align}
\P\{\tau_{\epsilon}\leq T\}\geq k\hspace{0.3cm}\mbox{for all}\hspace{0.3cm} \epsilon> \epsilon_1.
\label{11}
\end{align}
Construct a $\mathcal{C}^2$-function $\mathcal{V}:\R^3_+\to \textcolor{black}{[0,+\infty)}$ by
\begin{align*}
\mathcal{V}(S,I,\mathcal{D})=\left(S-m-m\ln \frac{S}{m}\right)+(I-1-\ln I)+\frac{\mu_2+\gamma}{\eta}(\mathcal{D}-1-\ln \mathcal{D}),
\end{align*}
where $m>0$ is a positive constant to be determined later. Obviously, this function is non-negative which can be seen from $x-1-\ln x \textcolor{black}{\geq} 0$ for all $x> 0$. According to the general Itô's formula \cite{lev1}, we obtain for all $0\leq t<\tau_{\epsilon}$,
\begin{align*}
\textup{d} \mathcal{V}(S,I,\mathcal{D})&=\mathcal{L} \mathcal{V}(S,I,\mathcal{D})\textup{d} t+\left(1-\frac{m}{S}\right)\sigma_1 S \textup{d} \mathcal{W}_1(t)\\&\;\;\;+\left(1-\frac{1}{I}\right)\sigma_2 I \textup{d} \mathcal{W}_2(t)+\frac{\mu_2+\gamma}{\eta}\left(1-\frac{1}{ \mathcal{D}}\right)\sigma_4 \mathcal{D} \textup{d} \mathcal{W}_4(t)\\&\;\;\;+\int_{\mathcal{U}}\lambda_1(u)S-m \ln(1+\lambda_1(u))\widetilde{\mathcal{N}}(\textup{d} t,\textup{d} u)\\&\;\;\;+\int_{\mathcal{U}}\lambda_2(u)I- \ln(1+\lambda_2(u))\widetilde{\mathcal{N}}(\textup{d} t,\textup{d} u)\\&\;\;\;+\int_{\mathcal{U}}\frac{\mu_2+\gamma}{\eta}\Big( \lambda_4(u)\mathcal{D}- \ln(1+\lambda_4(u))\Big)
\widetilde{\mathcal{N}}(\textup{d} t,\textup{d} u),
\end{align*}
where,
\begin{align*}
\mathcal{L}\mathcal{V}(S,I,\mathcal{D})&=A-\mu_1 S-\frac{m A}{S}
+m \beta \mathcal{D}+m\mu_1-(\mu_2+\gamma)I-\frac{\beta S \mathcal{D}}{I}\\&\;\;\;+(\mu_2+\gamma)+(\mu_2+\gamma)I-(\mu_2+\gamma)\mathcal{D}
-\frac{\textcolor{black}{(}\mu_2+\gamma\textcolor{black}{)}\textcolor{black}{I}}{\mathcal{D}}+(\mu_2+\gamma)\\&\;\;\;+\frac{m\sigma_1^2}{2}+\frac{\sigma_2^2}{2}+\frac{\mu_2+\gamma}{\eta}\frac{\sigma_4^2}{2}+
\int_{\mathcal{U}}m\lambda_1(u)- m\ln(1+\lambda_1(u))\nu(\textup{d} u)\\&\;\;\;+\int_{\mathcal{U}}\lambda_2(u)- \ln(1+\lambda_2(u))\nu(\textup{d} u)+\int_{\mathcal{U}}\frac{\mu_2+\gamma}{\eta}\Big(\lambda_4(u)- \ln(1+\lambda_4(u))\Big)\nu(\textup{d} u).
\end{align*}
Then
\begin{align*}
\mathcal{L}\mathcal{V}(S,I,\mathcal{D})&\leq A+2(\mu_2+\gamma)+m\mu_1+\textcolor{black}{\big(m\beta -(\mu_2+\gamma)\big)}\mathcal{D}+\frac{m\sigma_1^2}{2}\\&\;\;\;+\frac{\sigma_2^2}{2}+\frac{\mu_2+\gamma}{\eta}\frac{\sigma_4^2}{2}+\int_{\mathcal{U}}m\lambda_1(u)- m\ln(1+\lambda_1(u))\nu(\textup{d} u)\\&\;\;\;+\int_{\mathcal{U}}\lambda_2(u)- \ln(1+\lambda_2(u))\nu(\textup{d} u)+\int_{\mathcal{U}}\frac{\mu_2+\gamma}{\eta}\Big(\lambda_4(u)- \ln(1+\lambda_4(u))\Big)\nu(\textcolor{black}{\textup{d} u}).
\end{align*}
Given the fact that $x-\ln(1+x)\geq0$ for all $x> 1$ and the hypothesis \textbf{($\mathcal{H}_2$)}, we define
\begin{align*}
\mathcal{J}_1&\equiv\int_{\mathcal{U}}m\lambda_1(u)- m\ln(1+\lambda_1(u))\nu(\textup{d} u)+\int_{\mathcal{U}}\lambda_2(u)- \ln(1+\lambda_2(u))\nu(\textup{d}t u)\\&\;\;\;+\int_{\mathcal{U}}\frac{\mu_2+\gamma}{\eta}\Big(\lambda_4(u)- \ln(1+\lambda_4(u))\Big)\nu(\textcolor{black}{\textup{d} u}).
\end{align*}
To simplify, we choose $m=\frac{\mu_2+\gamma}{\beta}$. Then, we obtain
\begin{align*}
\mathcal{L}\mathcal{V}(S,I,\mathcal{D})\leq A+2(\mu_2+\gamma)+m\mu_1+\frac{m\sigma_1^2}{2}+\frac{\sigma_2^2}{2}+\frac{\mu_2+\gamma}{\eta}\frac{\sigma_4^2}{2}+\mathcal{J}_1
\equiv \mathcal{J}_2.
\end{align*}
The proof of the remainder is similar to the proof of Theorem 2 in \cite{ki}, so we omitted it.
\end{proof}
\section{Conditions of stochastic extinction and permanence in the mean of the epidemic}\label{sec2}
In mathematical epidemiology, we are generally interested in two things, the first is to know when the epidemic will die out, and the second is when it will continue and persist. In this section, we will try our best to find sufficient conditions for these two interesting asymptotic proprieties in terms of model parameters and intensities of noises. For the sake of notational simplicity, we define
\begin{itemize}
\item[$\bullet$] $\dis\bar{\sigma}\triangleq\max\{\sigma_1^2,\sigma_2^2,\sigma_4^2\}$\hspace{0.3cm} and \hspace{0.3cm}$\vartheta\triangleq\min\{\mu_1,\mu_2+\gamma-\eta,\eta\}$.
\item[$\bullet$] $\bar{\lambda}(u)\triangleq\max\{\lambda_1(u),\lambda_2(u),\lambda_4(u)\}$ \hspace{0.3cm}and \hspace{0.3cm}$\underline{\lambda}(u)\triangleq\min\{\lambda_1(u),\lambda_2(u),\lambda_4(u)\}$.
\item[$\bullet$] $\hat{\zeta}_{p}(u)\triangleq\big[1+\bar{\lambda}(u)\big]^{p}-1-p\bar{\lambda}(u)$ \hspace{0.3cm}and\hspace{0.3cm} $\check{\zeta}_{p}(u)\triangleq\big[1+\underline{\lambda}(u)\big]^{p}-1-p\underline{\lambda}(u)$.
\item[$\bullet$] $\xi(u)\triangleq\max\big\{\hat{\zeta}_{p}(u),\check{\zeta}_{p}(u)\big\}$ \hspace{0.3cm}and \hspace{0.3cm}$\ell_{p}\triangleq\dis\int_{\mathcal{U}}\xi(u)\nu(\textup{d}u)$.
\end{itemize}
To properly study the long-term of the perturbed  model (\ref{s2}), we have the following additionally hypotheses on the jump-diffusion coefficients:
\begin{itemize}
\item[$\bullet$] \textbf{($\mathcal{H}_3$):} For $i=1,2,4$, we assume that $\dis\int_{\mathcal{U}} \Big[\ln(1+\lambda_{\textcolor{black}{i}}(u))\Big]^2\nu(\textup{d}u)<\infty$ .
\item[$\bullet$] \textbf{($\mathcal{H}_4$):} For $i=1,2,4$, we assume that $\dis\int_{\mathcal{U}} \Big[\big(1+\bar{\lambda}(u)\big)^{2}-1\Big]^2\nu(\textup{d}u)<\infty$.
\item[$\bullet$] \textbf{($\mathcal{H}_5$):} We suppose that there exists some real number $p>2$  such that $\dis\chi_{1,p}=\vartheta-\frac{(p-1)}{2}\bar{\sigma}-\frac{1}{p}\ell_{p}> 0.$ 
\end{itemize}
For the convenience of discussion in the stochastic model (\ref{s2}), we introduce two lemmas which will be used in our analysis. 
\begin{lem}[\cite{pr}]
We assume that the conditions \textbf{($\mathcal{H}_4$)} and \textbf{($\mathcal{H}_5$)} hold. Let $(S(t),I(t),\mathcal{D}(t))$ be the positive solution of the system (\ref{s2}) with any given initial condition $(S(0),I(0),\mathcal{D}(0))\in\R^3_+$. Let also $\psi(t)\in\R_+$ be the solution of the equation (\ref{s3}) with any given initial value $\psi(0)=S(0)\in\R_+$. Then
\begin{itemize}
\item $\dis \underset{t\to\infty}{\lim}~t^{-1}~\psi(t)=0, \hspace{0.3cm} \underset{t\to\infty}{\lim}~t^{-1}~\psi^2(t)=0, \hspace{0.3cm}\underset{t\to\infty}{\lim}~t^{-1}~S(t)=0,  \hspace{0.2cm} \underset{t\to\infty}{\lim}~t^{-1}~I(t)=0,  \hspace{0.2cm}\mbox{and}\hspace{0.2cm}  \underset{t\to\infty}{\lim}~t^{-1}~\mathcal{D}(t)=0 \hspace{0.3cm}\mbox{a.s.}$
\item $\dis \underset{t\to\infty}{\lim}~t^{-1}~\int^t_0 \psi(s)\dt \mathcal{W}_1(s)=0, \hspace{0.3cm}\underset{t\to\infty}{\lim}~t^{-1}~\int^t_0 \psi^2(s)\dt \mathcal{W}_1(s)=0, \hspace{0.3cm}\underset{t\to\infty}{\lim}~t^{-1}~\int^t_0 S(s)\dt \mathcal{W}_1(s)=0, \\\underset{t\to\infty}{\lim}~t^{-1}~\int^t_0 I(s)\dt \mathcal{W}_2(s)=0,  \hspace{0.2cm}\mbox{and}\hspace{0.2cm}  \underset{t\to\infty}{\lim}~t^{-1}~\int^t_0 \mathcal{D}(s)\dt \mathcal{W}_4(s)=0 \hspace{0.3cm}\mbox{a.s.}$
\item $\dis \underset{t\to\infty}{\lim}~t^{-1}~\int^t_0\int_{\mathcal{U}} \lambda_1(u)\psi(s^{-})\widetilde{\mathcal{N}}(\textup{d}s,\textup{d}u)=0,\hspace{0.3cm}\underset{t\to\infty}{\lim}~t^{-1}~\int^t_0\int_{\mathcal{U}} \Big((1+\lambda_1(u))^2-1\Big)\psi^2(s^{-})\widetilde{\mathcal{N}}(\textup{d}s,\textup{d}u)=0,\\\hspace{1.5cm}\underset{t\to\infty}{\lim}~t^{-1}~\int^t_0\int_{\mathcal{U}} \lambda_1(u)S(s^{-})\widetilde{\mathcal{N}}(\textup{d}s,\textup{d}u)=0, \hspace{0.3cm}\underset{t\to\infty}{\lim}~t^{-1}~\int^t_0\int_{\mathcal{U}} \lambda_2(u)I(s^{-})\widetilde{\mathcal{N}}(\textup{d}s,\textup{d}u)=0,\\\hspace{3cm}\mbox{and}\hspace{0.2cm}\underset{t\to\infty}{\lim}~t^{-1}~\int^t_0\int_{\mathcal{U}}\lambda_4(u)\mathcal{D}(s^{-})\widetilde{\mathcal{N}}(\textup{d}s,\textup{d}u)=0\hspace{0.3cm}\mbox{a.s.}$
\end{itemize}
\label{lem1m}
\end{lem}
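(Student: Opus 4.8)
The plan is to deduce all three families of limits in Lemma~\ref{lem1m} from a single uniform $p$-th moment estimate for the aggregated process $X(t)=S(t)+I(t)+\mathcal{D}(t)$. The decisive structural fact is that the incidence terms $\pm\beta S\mathcal{D}$ cancel in the drift of $X$, leaving
\begin{align*}
A-\mu_1 S-(\mu_2+\gamma-\eta)I-\eta\mathcal{D}\;\leq\;A-\vartheta X ,
\end{align*}
so that $X$ is a mean-reverting jump-diffusion with reversion rate $\vartheta=\min\{\mu_1,\mu_2+\gamma-\eta,\eta\}$, precisely the quantity entering $\chi_{1,p}$. First I would apply the generalized It\^o formula to $X^p$. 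The continuous martingale part contributes $\tfrac{p(p-1)}{2}X^{p-2}(\sigma_1^2S^2+\sigma_2^2I^2+\sigma_4^2\mathcal{D}^2)\leq\tfrac{p(p-1)}{2}\bar{\sigma}X^p$ (using $S^2+I^2+\mathcal{D}^2\leq X^2$), while the compensated jump part contributes $\int_{\mathcal{U}}\big[(X+\Delta X)^p-X^p-pX^{p-1}\Delta X\big]\nu(\dt u)$ with $\Delta X=\lambda_1(u)S+\lambda_2(u)I+\lambda_4(u)\mathcal{D}$. Since $\underline{\lambda}(u)X\leq\Delta X\leq\bar{\lambda}(u)X$ and $x\mapsto(1+x)^p-1-px$ is convex on $[\underline{\lambda}(u),\bar{\lambda}(u)]$ (here \textbf{($\mathcal{H}_2$)} ensures $1+\underline{\lambda}>0$), the integrand is at most $\xi(u)X^p$, so the jump contribution is bounded by $\ell_p X^p$. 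Collecting terms gives
\begin{align*}
\mathcal{L}(X^p)\;\leq\;pAX^{p-1}-p\chi_{1,p}X^p ,
\end{align*}
and because $\chi_{1,p}>0$ by \textbf{($\mathcal{H}_5$)}, a Young-inequality splitting of $pAX^{p-1}$ yields $\limsup_{t\to\infty}\E[X^p(t)]\leq C$. Hypothesis \textbf{($\mathcal{H}_4$)} is what makes every jump integral in this expansion finite.

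With this bound secured, the first family (the growth limits) follows by a Borel--Cantelli discretisation. Fixing $\delta\in(0,1)$, I would estimate $\E\big[\sup_{n\leq t\leq n+1}X^p(t)\big]$ uniformly in $n$ by combining the It\^o expansion with the Burkholder--Davis--Gundy inequality for the Brownian integrals and Kunita's inequality for the jump integrals. Chebyshev then gives $\P\big(\sup_{n\leq t\leq n+1}X^p(t)>n^{1+\delta}\big)\leq Cn^{-(1+\delta)}$, which is summable, so Borel--Cantelli forces $t^{-1}X(t)\to0$ a.s.; since $p>2$, the same estimate yields $t^{-1}X^2(t)\to0$ a.s., and the individual statements for $S$, $I$, $\mathcal{D}$ follow from $0\leq S,I,\mathcal{D}\leq X$. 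Because equation (\ref{s3}) for $\psi$ is a scalar instance of the same mean-reverting type, repeating the computation for $\psi^p$ delivers $t^{-1}\psi(t)\to0$ and $t^{-1}\psi^2(t)\to0$ a.s.

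The second and third families are each handled by the strong law of large numbers for local martingales, in the form: if $M$ is a local martingale with $\int_0^\infty (1+s)^{-2}\,\dt\langle M\rangle_s<\infty$ a.s., then $t^{-1}M(t)\to0$ a.s. For a Brownian term such as $M(t)=\int_0^t\psi(s)\,\dt\mathcal{W}_1(s)$ one has $\langle M\rangle_t=\int_0^t\psi^2(s)\,\dt s$, and $\E\int_0^\infty(1+s)^{-2}\psi^2(s)\,\dt s\leq C\int_0^\infty(1+s)^{-2}\,\dt s<\infty$ by the second-moment bound, so the integral is finite a.s.\ and the limit vanishes; the integrals against $S$, $I$, $\mathcal{D}$ are identical. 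For a compensated-Poisson term such as $\int_0^t\int_{\mathcal{U}}\lambda_1(u)\psi(s^{-})\widetilde{\mathcal{N}}(\dt s,\dt u)$ the predictable bracket is $\int_0^t\int_{\mathcal{U}}\lambda_1^2(u)\psi^2(s)\,\nu(\dt u)\,\dt s$, controlled by \textbf{($\mathcal{H}_1$)} together with the bound on $\E[\psi^2]$. The quadratic term $\int_0^t\int_{\mathcal{U}}\big((1+\lambda_1(u))^2-1\big)\psi^2(s^{-})\widetilde{\mathcal{N}}(\dt s,\dt u)$ is treated the same way, its predictable bracket involving $\int_{\mathcal{U}}\big[(1+\bar{\lambda}(u))^2-1\big]^2\nu(\dt u)<\infty$ from \textbf{($\mathcal{H}_4$)} and a fourth-moment bound on $\psi$, the latter obtained by rerunning the Step~1 computation at exponent four under the standing hypotheses.

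The main obstacle is Step~1, the uniform $p$-th moment bound. All the genuine work lies in expanding $X^p$ under the jump-diffusion It\^o formula and in verifying that the nonlinear jump remainder is truly dominated by $\ell_p X^p$: this is exactly where the engineered quantities $\hat{\zeta}_p$, $\check{\zeta}_p$, $\xi$ and $\ell_p$ are needed, and where the two-sided bound $\underline{\lambda}X\leq\Delta X\leq\bar{\lambda}X$ must be paired with the convexity of $x\mapsto(1+x)^p-1-px$ to push the integrand to an endpoint. Once the clean inequality $\mathcal{L}(X^p)\leq pAX^{p-1}-p\chi_{1,p}X^p$ is in place, the remaining steps are standard.
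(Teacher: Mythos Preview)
The paper does not supply its own proof of this lemma: it is quoted from reference \cite{pr}, and the accompanying remark says only that ``by using the same approach adopted in Lemma~2.5 of \cite{pr}, we can easily prove the last result.'' Your outline is precisely the standard argument one expects behind that citation --- a uniform $p$-th moment bound for the aggregate $X=S+I+\mathcal{D}$ obtained from the mean-reverting drift (the cancellation of $\pm\beta S\mathcal{D}$ giving $\mathcal{L}X\leq A-\vartheta X$) together with the convexity estimate that pushes the jump remainder to $\ell_pX^p$, then a Borel--Cantelli discretisation with BDG/Kunita maximal inequalities for the pathwise growth limits, and finally the strong law of large numbers for local martingales for the stochastic-integral terms. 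In that sense your proposal matches what the paper defers to.

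One point deserves care. For the $\psi^2$-integrals in the second and third bullets (namely $t^{-1}\int_0^t\psi^2\,\dt\mathcal{W}_1$ and $t^{-1}\int_0^t\int_{\mathcal{U}}\big[(1+\lambda_1)^2-1\big]\psi^2\,\widetilde{\mathcal{N}}$), the predictable brackets involve $\int_0^t\psi^4(s)\,\dt s$, and you write that the required fourth-moment bound is ``obtained by rerunning the Step~1 computation at exponent four under the standing hypotheses.'' But hypothesis \textbf{($\mathcal{H}_5$)} only asserts the existence of \emph{some} $p>2$ with $\chi_{1,p}>0$, not $p\geq4$; since $\chi_{1,p}$ is decreasing in $p$, a witness $p\in(2,4)$ does not automatically deliver $\chi_{1,4}>0$, and the pathwise estimate $\psi(t)=O\big(t^{(1+\delta)/p}\big)$ from Borel--Cantelli is then too weak to force $\int_0^t\psi^4(s)\,\dt s=o(t^2)$. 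The paper does not address this either (it is buried in the citation), so your outline is no less rigorous than what the paper offers; but a complete treatment would need either to strengthen \textbf{($\mathcal{H}_5$)} to $p>4$, or to exploit the simpler one-dimensional structure of equation~(\ref{s3}) to obtain the fourth-moment bound for $\psi$ under a weaker condition involving only $\mu_1$, $\sigma_1$ and $\lambda_1$.
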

\begin{rem}
By using the same approach \textcolor{black}{adopted} in Lemma 2.5 of \cite{pr}, we can easily prove the last result. Note that the hypothesis \textbf{($\mathcal{H}_5$)} is an ameliorated version of it corresponding hypothesis frequently used in many previous works\textcolor{black}{, f}or example, \cite{lev1,lev2,lev3}. Therefore, the adoption of $\chi_{1,p}$ in our paper raises the optimality of our calculus and results.
\end{rem}
\begin{rem}
In the absence of Lévy jumps (see for example \cite{sta}), the stationary distribution expression is used to estimate the time averages of the auxiliary process solution by employing the ergodic theorem \cite{mo}. Unluckily, the said expression is still unknown in the case of the Lévy noise. This issue is implicitly mentioned in \cite{58,sharp} as an open question, and the authors presented the threshold analysis of their model with an unknown stationary distribution formula. In this article, we propose an alternative method to establish the exact expression of the threshold parameter without having recourse to the use of ergodic theorem. This new idea that we propose is presented in the following lemma.
\end{rem}
\begin{lem}
Assume \textcolor{black}{that the conditions \textbf{($\mathcal{H}_4$)} and \textbf{($\mathcal{H}_5$)} hold.} Let $\psi(t)$ be the solution of (\ref{s3}) with an initial value $\psi(0)\in\R_{+}$. \textcolor{black}{Then $\dis\chi_2=2\mu_1-\sigma_1^2-\int_{\mathcal{U}}\lambda_1^2(u)\nu(\textup{d}u)>0$,} and  
\begin{itemize}
\item $\dis\underset{t\to\infty}{\lim}t^{-1}~\int^t_0\psi(s)\textup{d}s=\frac{A}{\mu_1}\hspace{0.3cm}\mbox{a.s.}$
\item $\dis\underset{t\to\infty}{\lim}t^{-1}~\int^t_0\psi^2(s)\textup{d}s=\frac{2A^2}{\mu_1\chi_2}\hspace{0.3cm}\mbox{a.s.}$
\end{itemize}
\label{lemmas}
\end{lem}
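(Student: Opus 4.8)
The plan is to treat the three assertions in turn, beginning with the positivity of $\chi_2$, since the two limits both rely on it. The crucial elementary ingredient is the numerical inequality $(1+a)^p-1-pa\ge \tfrac{p}{2}a^2$, valid for every $p\ge 2$ and every $a>-1$; one checks this by setting $F(a)=(1+a)^p-1-pa-\tfrac{p}{2}a^2$ and observing that $F(0)=F'(0)=0$, that $F''\ge 0$ on $[0,\infty)$ (so $F$ is convex there), while on $(-1,0)$ the derivative $F'(a)=p\big[(1+a)^{p-1}-1-a\big]$ stays $\le 0$, forcing $F\ge 0$ throughout. Applying this with $a=\bar{\lambda}(u)$ and with $a=\underline{\lambda}(u)$ gives $\tfrac{2}{p}\,\xi(u)\ge\max\{\bar{\lambda}^2(u),\underline{\lambda}^2(u)\}\ge\lambda_1^2(u)$ for $\nu$-a.e.\ $u$, since $\underline{\lambda}(u)\le\lambda_1(u)\le\bar{\lambda}(u)$. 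Integrating, $\tfrac{2}{p}\ell_p\ge\int_{\mathcal{U}}\lambda_1^2(u)\nu(\textup{d}u)$. Now from $(\mathcal{H}_5)$ together with $\vartheta\le\mu_1$, $\bar{\sigma}\ge\sigma_1^2$ and $p>2$ (so $\tfrac{p-1}{2}>\tfrac12$), I obtain $2\mu_1>\sigma_1^2+\tfrac{2}{p}\ell_p\ge\sigma_1^2+\int_{\mathcal{U}}\lambda_1^2(u)\nu(\textup{d}u)$, which is exactly $\chi_2>0$.

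For the first time-average I would integrate the scalar equation (\ref{s3}) over $[0,t]$ and divide by $t$, giving
\[
\frac{\psi(t)-\psi(0)}{t}=A-\mu_1\,t^{-1}\!\int_0^t\!\psi(s)\,\textup{d}s+\sigma_1\,t^{-1}\!\int_0^t\!\psi(s)\,\dt\mathcal{W}_1(s)+t^{-1}\!\int_0^t\!\int_{\mathcal{U}}\lambda_1(u)\psi(s^-)\widetilde{\mathcal{N}}(\textup{d}s,\textup{d}u).
\]
Letting $t\to\infty$ and invoking Lemma \ref{lem1m} (applicable thanks to $(\mathcal{H}_4)$ and $(\mathcal{H}_5)$), the left-hand side tends to $0$ since $t^{-1}\psi(t)\to 0$, and the last two stochastic averages vanish a.s.\ as well. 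The surviving identity $0=A-\mu_1\lim_{t\to\infty}t^{-1}\int_0^t\psi(s)\,\textup{d}s$ then yields the first claim $\lim_{t\to\infty}t^{-1}\int_0^t\psi(s)\,\textup{d}s=A/\mu_1$.

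For the second average I would apply the Itô formula for jump-diffusions to $\psi^2$. The decisive simplification is that the compensator of the jump part contributes $\int_{\mathcal{U}}\big[(1+\lambda_1(u))^2-1-2\lambda_1(u)\big]\nu(\textup{d}u)=\int_{\mathcal{U}}\lambda_1^2(u)\nu(\textup{d}u)$, so the drift collapses precisely to $2A\psi-\chi_2\psi^2$; explicitly
\[
\textup{d}\psi^2=\big(2A\psi-\chi_2\psi^2\big)\textup{d}t+2\sigma_1\psi^2\,\dt\mathcal{W}_1(t)+\int_{\mathcal{U}}\psi^2(t^-)\big[(1+\lambda_1(u))^2-1\big]\widetilde{\mathcal{N}}(\textup{d}t,\textup{d}u).
\]
Integrating, dividing by $t$, and sending $t\to\infty$, Lemma \ref{lem1m} annihilates the boundary term $t^{-1}\psi^2(t)$ and both martingale averages, while the drift average $2A\,t^{-1}\int_0^t\psi(s)\,\textup{d}s$ converges to $2A^2/\mu_1$ by the first claim. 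This leaves $0=2A^2/\mu_1-\chi_2\lim_{t\to\infty}t^{-1}\int_0^t\psi^2(s)\,\textup{d}s$, and since $\chi_2>0$ I may divide to conclude $\lim_{t\to\infty}t^{-1}\int_0^t\psi^2(s)\,\textup{d}s=2A^2/(\mu_1\chi_2)$. I expect the genuine obstacle to be the positivity of $\chi_2$ rather than the limits themselves: the two averaging arguments are clean consequences of Lemma \ref{lem1m} once the correct Itô expansion is secured, whereas relating the $p$-th order jump quantity $\ell_p$ from $(\mathcal{H}_5)$ to the quadratic quantity $\int_{\mathcal{U}}\lambda_1^2\,\nu$ requires the convexity inequality above and a careful bookkeeping of the definitions of $\bar{\lambda}$, $\underline{\lambda}$ and $\xi$.
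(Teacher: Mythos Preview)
Your two time-average computations are essentially identical to the paper's: integrate (\ref{s3}) to get the first limit, then apply It\^o's formula to $\psi^2$ and integrate again for the second, with Lemma~\ref{lem1m} killing the boundary and martingale terms in both cases.

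The genuine difference lies in the treatment of $\chi_2>0$. The paper argues this \emph{a posteriori}: from the integrated It\^o identity
\[
\frac{\chi_2}{t}\int_0^t\psi^2(s)\,\textup{d}s = \frac{2A}{t}\int_0^t\psi(s)\,\textup{d}s + (\text{terms vanishing as }t\to\infty),
\]
one first rules out $\chi_2=0$ (the right side would tend to $2A^2/\mu_1\neq 0$, contradiction), then divides to compute the limit, and finally infers $\chi_2>0$ from the nonnegativity of $t^{-1}\int_0^t\psi^2$. Your route is \emph{a priori} and quantitative: the convexity inequality $(1+a)^p-1-pa\ge\tfrac{p}{2}a^2$ (for $p\ge 2$, $a>-1$) yields $\tfrac{2}{p}\ell_p\ge\int_{\mathcal{U}}\lambda_1^2\,\nu(\textup{d}u)$, and combining this with $\vartheta\le\mu_1$, $\bar{\sigma}\ge\sigma_1^2$, $\tfrac{p-1}{2}>\tfrac12$ extracts $\chi_2>0$ directly from $(\mathcal{H}_5)$. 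Your argument is cleaner in that it makes the logical dependence transparent (one sees exactly how $(\mathcal{H}_5)$ forces $\chi_2>0$) and it avoids the slightly delicate step of inferring strict positivity of a limit from positivity of the approximants. The paper's argument, on the other hand, requires no elementary inequality beyond the It\^o calculus already in play.
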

\begin{proof}
Integrating from $0$ to $t$ on both sides of (\ref{s3}) yields
\begin{align*}
\frac{\psi(t)-\psi(0)}{t}=A-\frac{\mu_1}{t}\int^t_0\psi(s)\textup{d}s+\frac{\sigma_1}{t}\int_0^t \psi(s)\dt \mathcal{W}_1(s)+t^{-1}~\int^t_0\int_Z \lambda_1(u)\psi(s^{-})\widetilde{\mathcal{N}}(\textup{d}s,\textup{d}u).
\end{align*}
Clearly, we can derive that
\begin{align*}
t^{-1}~\int^t_0\psi(s)\textup{d}s=\frac{A}{\mu_1}-\frac{\psi(t)-\psi(0)}{\mu_1 t}+\frac{\sigma_1}{\mu_1 t}\int_0^t \psi(s)\dt \mathcal{W}_1(s)+\frac{1}{\mu_1 t}\int^t_0\int_Z \lambda_1(u)\psi(s^{-})\widetilde{\mathcal{N}}(\textup{d}s,\textup{d}u).
\end{align*}
By Lemma \ref{lem1m}, we get
\begin{align*}
\underset{t\to\infty}{\lim}t^{-1}~\int^t_0\psi(s)\textup{d}s=\frac{A}{\mu_1}\hspace{0.2cm}\mbox{a.s.}
\end{align*}
Now, applying the generalized Itô’s formula to model (\ref{s3}) leads to
\begin{align*}
\dt\psi^2(t)&=\bigg(2\psi(t)\Big(A-\mu_1 \psi(t)\Big)+\sigma_1^2\psi^2(t)+\int_{\mathcal{U}}\psi^2(t)\Big((1+\lambda_1(u))^2-1-2\lambda_1(u)\Big)\nu(\textup{d}u)\bigg)\textup{d}t\\&\;\;\;+2\sigma_1 \psi^2(t)\dt \mathcal{W}_1(t)+\int_{\mathcal{U}} \psi^2(t^{-})\Big((1+\lambda_1(u))^2-1\Big)\widetilde{\mathcal{N}}(\textup{d}t,\textup{d}u).
\end{align*}
Integrating both sides of the last expression from $0$ to $t$ \textcolor{black}{and then dividing by $t$}, yields
\textcolor{black}{
\begin{align*}
\dfrac{\psi^2(t)-\psi^2(0)}{t}&= 2A\times\dfrac{1}{t}\int^t_0\psi(s)\textup{d}s-\overbrace{\bigg(2\mu_1-\sigma_1^2-\int_{\mathcal{U}}\lambda_1^2(u)\nu(\textup{d}u)\bigg)}^{\chi_2}\times\dfrac{1}{t}\int^t_0\psi^2(s)\textup{d}s\\&\;\;\;+2\sigma_1\times\dfrac{1}{t}\int^t_0\psi^2(s)\dt \mathcal{W}_1(s)+\dfrac{1}{t}\int^t_0\int_Z\psi^2(s^-)\Big((1+\lambda_1(u))^2-1\Big)\widetilde{\mathcal{N}}(\textup{d}s,\textup{d}u).
\end{align*}}
 Therefore
\textcolor{black}{\begin{align}\label{ajout}
\dfrac{\chi_2}{t}~\int^t_0\psi^2(s)\textup{d}s &=\frac{2A}{ t}\int^t_0\psi(s)\textup{d}s+\frac{\big(\psi^2(0)-\psi^2(t)\big)}{t}+\frac{2\sigma_1}{t}\int^t_0\psi^2(s)\dt \mathcal{W}_1(s)\\&\;\;\;+\frac{1}{ t}\int^t_0\int_{\mathcal{U}}\psi^2(s^-)\Big((1+\lambda_1(u))^2-1\Big)\widetilde{\mathcal{N}}(\textup{d}s,\textup{d}u).
\end{align}}
\textcolor{black}{Clearly, $\chi_2\neq 0$, because if it is not the case, we will obtain by letting $t$ go to infinity in \eqref{ajout} $\dfrac{2A^2}{\mu}=0$, which is obviously impossible. So, and using Lemma \ref{lem1m}, we can easily verify that}
\begin{align*}
\underset{t\to\infty}{\lim}t^{-1}~\int^t_0\psi^2(s)\textup{d}s=\frac{2A^2}{\mu_1\chi_2}\hspace{0.2cm}\mbox{a.s.}
\end{align*}
\textcolor{black}{and since $\displaystyle{t^{-1}~\int^t_0\psi^2(s)\textup{d}s>0}$ for all $t>0$, we can conclude also that $\frac{2A^2}{\mu_1\chi_2}>0$ and then $\chi_2>0$. Hence the proof is completed. }
\end{proof}
We are now in the position to state and prove the main results of this paper. In the following, we always presume that the hypotheses \textbf{($\mathcal{H}_1$)}-\textbf{($\mathcal{H}_5$)} hold.
\subsection{Stochastic extinction of the epidemic}\label{sec21}
In this subsection, we give a sufficient condition for the stochastic extinction of the disease in the system (\ref{s2}), but before stating
the main result, we shall first recall the concept of the stochastic extinction.
\begin{defn}
\textcolor{black}{Let $\mathcal{X}(t)$ be a stochastic process that describes the  evolution of an infectious disease under a host population.}
\begin{itemize}
\item The \textcolor{black}{disease} is said to be exponentially extinct if $\underset{t\to\infty}{\limsup}~t^{-1}\ln \mathcal{X}(t)<0$ a.s.
\item The \textcolor{black}{ disease} is said to be stochastically extinct, or extinctive, if $\underset{t\to\infty}{\lim}\mathcal{X}(t)=0$ a.s.
\end{itemize}
\end{defn}
\begin{rem}
Obviously, it can be seen from \textcolor{black}{the}  above definitions that the \textcolor{black}{exponential} extinction implies the extinction.
\end{rem}
For brevity and simplicity in writing the next result, we adopt the following notations:
\begin{itemize}
\item $\dis\Upsilon\triangleq\min\{\mu_2+\gamma,\eta\}(\sqrt{\mathcal{T}^{\star}}-1)\mathds{1}_{\{\mathcal{T}^{\star}\leq 1\}}+\max\{\mu_2+\gamma,\eta\}(\sqrt{\mathcal{T}^{\star}}-1)\mathds{1}_{\{\mathcal{T}^{\star}> 1\}}$.
\item $\dis\Lambda\triangleq\sigma_1^2+\int_{\mathcal{U}}\lambda_1^2(u)\nu(\textup{d}u)$\hspace{0.3cm} and \hspace{0.3cm}$\dis\Sigma\triangleq\Big(2\big(\sigma_2^{-2}+\sigma_4^{-2}\big)\Big)^{-1}$.
\item $\dis\bar{\aleph}(u)\triangleq\Big(\ln\big(1+\lambda_2(u)\wedge\lambda_4(u)\big)-\lambda_2(u)\wedge\lambda_4(u)\Big)\times\mathds{1}_{\{\lambda_2(u)\wedge\lambda_4(u)>0\}}$.
\item $\dis\underline{\aleph}(u)\triangleq\Big(\ln\big(1+\lambda_2(u)\vee\lambda_4(u)\big)-\lambda_2(u)\vee\lambda_4(u)\Big)\times\mathds{1}_{\{\lambda_2(u)\vee\lambda_4(u)\leq 0\}}$.
\item $\dis\aleph(u)\triangleq\bar{\aleph}(u)+\underline{\aleph}(u)$\hspace{0.3cm} and\hspace{0.3cm} $\dis\Pi\triangleq\int_{\mathcal{U}}\aleph(u)\nu(\textup{d}u)$.
\item $\dis\Theta\triangleq\Upsilon+\Pi-\Sigma+\eta\Bigg(\frac{\mathcal{T}^{\star}\Lambda}{\chi_2}\Bigg)^{\frac{1}{2}}$.
\item \textcolor{black}{For any vector $v\in\mathbb{R}^n$, we denote its transpose by $v^T$}.
\end{itemize}
\begin{thm}
Let us denote by $(S(t),I(t),\mathcal{D}(t))$ the solution of the stochastic system (\ref{s2}) that starts from a given initial data $(S(0),I(0),\mathcal{D}(0))\in\R^3_+$.  \textcolor{black}{Under the hypotheses \textbf{($\mathcal{H}_1$)}-\textbf{($\mathcal{H}_5$)}, we have}
\begin{align*}
\underset{t\to\infty}{\lim\sup}~t^{-1}\ln\Bigg(\frac{1}{\mu_2+\gamma}I(t)+\frac{\sqrt{\mathcal{T}^{\star}}}{\eta}\mathcal{D}(t)\Bigg)\leq \Theta\hspace*{0.3cm}\mbox{a.s.}
\end{align*}
Notably, if $\Theta<0$, then the epidemic will go to zero exponentially with probability one. Consequently, 
\begin{align*}
\underset{t\to\infty}{\lim} I(t)=0\hspace{0.3cm}\mbox{and}\hspace{0.3cm}\underset{t\to\infty}{\lim} \mathcal{D}(t)=0\hspace*{0.2cm}\mbox{a.s.}
\end{align*}
\label{extinction}
\end{thm}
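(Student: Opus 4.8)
The plan is to study the one-dimensional functional $\Phi(t):=\frac{1}{\mu_2+\gamma}I(t)+\frac{\sqrt{\mathcal{T}^\star}}{\eta}\mathcal{D}(t)$, which is exactly the quantity appearing in the statement, and to control its logarithmic growth. First I would apply the generalized Itô formula to $\ln\Phi(t)$, writing $\dt\ln\Phi=\mathcal{L}\ln\Phi\,\dt t+\dt M^c(t)+\dt M^d(t)$, where $M^c$ collects the Brownian martingales built from $\Phi^{-1}\big(\frac{\sigma_2}{\mu_2+\gamma}I\,\dt\mathcal{W}_2+\frac{\sqrt{\mathcal{T}^\star}\sigma_4}{\eta}\mathcal{D}\,\dt\mathcal{W}_4\big)$ and $M^d$ is the compensated Poisson integral of $\ln\big(1+\Phi^{-1}(\frac{\lambda_2(u)}{\mu_2+\gamma}I+\frac{\sqrt{\mathcal{T}^\star}\lambda_4(u)}{\eta}\mathcal{D})\big)$. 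The drift $\mathcal{L}\ln\Phi$ then splits into a linear part coming from the deterministic coefficients, a diffusion correction $-\tfrac12\Phi^{-2}\big(\frac{\sigma_2^2}{(\mu_2+\gamma)^2}I^2+\frac{\mathcal{T}^\star\sigma_4^2}{\eta^2}\mathcal{D}^2\big)$, and a jump compensator.

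Next I would bound each contribution by the corresponding constant in $\Theta$. The diffusion correction is a weighted quadratic form divided by the square of the linear form $\Phi$; minimizing $\frac{\sigma_2^2 x^2+\sigma_4^2 y^2}{(x+y)^2}$ over $x,y>0$ gives $(\sigma_2^{-2}+\sigma_4^{-2})^{-1}$, so this term is $\le-\Sigma$ pointwise. For the jump compensator, since $z\mapsto\ln(1+z)-z$ is concave with maximum $0$ at $z=0$, and the integrand argument is a convex combination of $\lambda_2(u)$ and $\lambda_4(u)$, its supremum over the admissible range is attained at the endpoint dictated by the sign pattern of $\lambda_2\wedge\lambda_4$ and $\lambda_2\vee\lambda_4$; this is precisely $\aleph(u)$, so the compensator is $\le\Pi$. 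For the linear part I would replace $S$ by its long-run mean $A/\mu_1$: the resulting expression is $(\sqrt{\mathcal{T}^\star}-1)\Phi^{-1}(I+\sqrt{\mathcal{T}^\star}\mathcal{D})$, and optimizing this ratio of linear forms over the positive quadrant (its value lies between $\mu_2+\gamma$ and $\eta$) yields exactly $\Upsilon$, the case split reflecting the sign of $\sqrt{\mathcal{T}^\star}-1$.

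The main obstacle is the residual coupling term $\frac{\beta(S-A/\mu_1)\mathcal{D}}{(\mu_2+\gamma)\Phi}$ created by this substitution, which must produce the last summand $\eta(\mathcal{T}^\star\Lambda/\chi_2)^{1/2}$. Here I would use $\mathcal{D}/\Phi\le\eta/\sqrt{\mathcal{T}^\star}$, discard the favorable sign on the event $S<A/\mu_1$, and invoke the comparison $S(t)\le\psi(t)$, valid because (\ref{s3}) is the $S$-equation of (\ref{s2}) with the nonpositive drift $-\beta S\mathcal{D}$ removed while the diffusion and jump coefficients are identical. A Cauchy--Schwarz estimate then reduces the time average to $\big(t^{-1}\int_0^t(\psi-A/\mu_1)^2\,\dt s\big)^{1/2}$, and Lemma \ref{lemmas} gives $t^{-1}\int_0^t\psi\,\dt s\to A/\mu_1$ and $t^{-1}\int_0^t\psi^2\,\dt s\to 2A^2/(\mu_1\chi_2)$, whence this limit equals $\frac{A}{\mu_1}(\Lambda/\chi_2)^{1/2}$ since $2\mu_1-\chi_2=\Lambda$; multiplying by the prefactor collapses to $\eta(\mathcal{T}^\star\Lambda/\chi_2)^{1/2}$.

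Finally I would integrate $\dt\ln\Phi$ on $[0,t]$, divide by $t$, and let $t\to\infty$. The martingale terms vanish almost surely by the strong law of large numbers for local martingales, their brackets growing at most linearly because the integrands $\frac{aI}{\Phi}$ and $\frac{b\mathcal{D}}{\Phi}$ are bounded and by hypothesis $(\mathcal{H}_3)$ together with Lemma \ref{lem1m}. Collecting the four bounds yields $\limsup_{t\to\infty}t^{-1}\ln\Phi(t)\le\Upsilon+\Pi-\Sigma+\eta(\mathcal{T}^\star\Lambda/\chi_2)^{1/2}=\Theta$ almost surely. If $\Theta<0$ this forces $\Phi(t)\to0$ exponentially, and since both $\frac{1}{\mu_2+\gamma}I(t)$ and $\frac{\sqrt{\mathcal{T}^\star}}{\eta}\mathcal{D}(t)$ are nonnegative, $I(t)\to0$ and $\mathcal{D}(t)\to0$ almost surely, which completes the proof.
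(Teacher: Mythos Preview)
Your proposal is correct and follows essentially the same route as the paper's proof: apply It\^o's formula to $\ln\Phi$, bound the diffusion correction by $-\Sigma$ via Cauchy--Schwarz, the jump compensator by $\Pi$ via the concavity of $z\mapsto\ln(1+z)-z$ on the convex hull of $\lambda_2,\lambda_4$, split the drift into the part giving $\Upsilon$ plus the residual handled by the comparison $S\le\psi$, H\"older, and Lemma~\ref{lemmas}, and dispose of the martingales by the strong law of large numbers. The only cosmetic difference is that the paper organizes the drift computation through the left Perron eigenvector $(1,\sqrt{\mathcal{T}^\star})$ of the $2\times 2$ matrix $\mathfrak{M}_0=\bigl(\begin{smallmatrix}0&\mathcal{T}^\star\\1&0\end{smallmatrix}\bigr)$ to obtain $(\sqrt{\mathcal{T}^\star}-1)(I+\sqrt{\mathcal{T}^\star}\mathcal{D})$ at once, whereas you reach the same expression by direct algebra.
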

\begin{proof}
Our proof starts with the use of Theorem 1.4 in \cite{ma} to establish that there is a left eigenvector of the following matrix $$\mathfrak{M}_0=\left(%
\begin{array}{cc}
0& \frac{\beta A}{\mu_1(\mu_2+\gamma)}\\
1&0
\end{array}%
\right)$$
corresponding to $\sqrt{\mathcal{T}^{\star}}$. This vector will be denoted by $(e_1,e_2)=(1,\sqrt{\mathcal{T}^{\star}})$. Then, $\sqrt{\mathcal{T}^{\star}}(e_1,e_2)=(e_1,e_2)\mathfrak{M}_0$. On the other hand, we define a $\mathcal{C}^2$-function $\textcolor{black}{\mathcal{M}}:\R^2_+\textcolor{black}{\rightarrow}\R_+$ by
$$\mathcal{M}(I(t),\mathcal{D}(t))=\omega_1 I(t)+\omega_2 \mathcal{D}(t),$$
where $\omega_1=\frac{e_1}{\mu_2+\gamma}$ and $\omega_2=\frac{e_2}{\eta}$. By applying the generalized Itô's formula with Lévy jumps we obtain
\begin{align*}
\dt\ln \mathcal{M}(I(t),\mathcal{D}(t))&=\mathcal{L}\ln \mathcal{M}(I(t),\mathcal{D}(t))\textup{d}t+\frac{1}{\omega_1 I(t)+\omega_2D(t)}\Big\{\omega_1\sigma_2I(t)\dt \mathcal{W}_2(t)+\omega_2\sigma_4D(t)\dt \mathcal{W}_3(t)\Big\}\\&\;\;\;+\int_{\mathcal{U}}\ln\bigg(1+\frac{\omega_1\lambda_2(u)I(t)+\omega_2\lambda_4(u)\mathcal{D}(t)}{\omega_1 I(t)+\omega_2D(t)}\bigg)\widetilde{\mathcal{N}}(\textup{d}t,\textup{d}u),
\end{align*}
where
\begin{align*}
\mathcal{L}\ln \mathcal{M}(I(t),\mathcal{D}(t))&=\frac{1}{\omega_1 I(t)+\omega_2D(t)}\Big\{\omega_1\Big(\beta S(t)\mathcal{D}(t)-(\mu_2+\gamma)I(t)\Big)+\omega_2\eta\Big(I(t)-\mathcal{D}(t)\Big)\Big\}\\&\;\;\;-\frac{1}{2(\omega_1 I(t)+\omega_2D(t))^2}\Big\{\omega_1^2\sigma_2^2I^2(t)+\omega_2^2\sigma_4^2D^2(t)\Big\}\\&\;\;\;+\int_{\mathcal{U}}\Bigg[\ln\Bigg(1+\frac{\omega_1\lambda_2(u)I(t)+\omega_2\lambda_4(u)\mathcal{D}(t)}{\omega_1 I(t)+\omega_2D(t)}\Bigg)-\frac{\omega_1\lambda_2(u)I(t)+\omega_2\lambda_4(u)\mathcal{D}(t)}{\omega_1 I(t)+\omega_2D(t)}\Bigg]\nu(\textup{d}u).
\end{align*}
Moreover, it is easy to show the following inequality
\begin{align*}
\left(\frac{1}{\sigma_2^2}+\frac{1}{\sigma_4^2}\right)\times\Big(\omega_1^2\sigma_2^2I^2(t)+\omega_2^2\sigma_4^2\mathcal{D}^2(t)\Big)\geq\left(\frac{1}{\sigma_2}\omega_1\sigma_2I(t)+\frac{1}{\sigma_4}\omega_2\sigma_4\mathcal{D}(t)\right)^2.
\end{align*}
In order to find an optimal and good majorization, we adopt the fact that
\begin{align}
\int_{\mathcal{U}}\Bigg[\ln\Bigg(1+\frac{\omega_1\lambda_2(u)I(t)+\omega_2\lambda_4(u)\mathcal{D}(t)}{\omega_1 I(t)+\omega_2D(t)}\Bigg)-\frac{\omega_1\lambda_2(u)I(t)+\omega_2\lambda_4(u)\mathcal{D}(t)}{\omega_1 I(t)+\omega_2D(t)}\Bigg]\nu(\textup{d}u)\leq \Pi.
\label{optim}
\end{align}
By using the last two results, we get
\begin{align*}
\mathcal{L}\ln \mathcal{M}(I(t),\mathcal{D}(t))&\leq \frac{\omega_1\beta \mathcal{D}(t)}{\omega_1 I(t)+\omega_2D(t)}\Big(S(t)-\frac{A}{\mu_1}\Big)+\Pi-\Sigma\\&\;\;\;+\frac{1}{\omega_1 I(t)+\omega_2D(t)}\Bigg\{\omega_1\Big(\frac{\beta A}{\mu_1}\mathcal{D}(t)-(\mu_2+\gamma)I(t)\Big)+\omega_2\eta\Big(I(t)-\mathcal{D}(t)\Big)\Bigg\}.
\end{align*}
By the stochastic comparison theorem, we have
\begin{align*}
\mathcal{L}\ln \mathcal{M}(I(t),\mathcal{D}(t))&\leq \frac{\omega_1\beta \mathcal{D}(t)}{\omega_1 I(t)+\omega_2D(t)}\Big(\psi(t)-\frac{A}{\mu_1}\Big)+\Pi-\Sigma\\&\;\;\;+\frac{1}{\omega_1 I(t)+\omega_2D(t)}\Bigg\{\frac{e_1}{\mu_2+\gamma}\Bigg(\frac{\beta A}{\mu_1}\mathcal{D}(t)-(\mu_2+\gamma)I(t)\Bigg)+\frac{e_2}{\eta}\Big(\eta I(t)-\eta \mathcal{D}(t)\Big)\Bigg\}.
\end{align*}
Then, we obtain that
\begin{align*}
\mathcal{L}\ln \mathcal{M}(I(t),\mathcal{D}(t))&\leq \frac{\omega_1\beta }{\omega_2}\Big |\psi(t)-\frac{A}{\mu_1}\Big |+\Pi-\Sigma+\frac{1}{\omega_1 I(t)+\omega_2D(t)}(e_1,e_2)\Big(\textcolor{black}{\mathfrak{M}_0}(I(t),\mathcal{D}(t))^T-(I(t),\mathcal{D}(t))^T\Big)\\
&=\frac{\omega_1\beta }{\omega_2}\Big |\psi(t)-\frac{A}{\mu_1}\Big |+\Pi-\Sigma+\frac{1}{\omega_1 I(t)+\omega_2D(t)}\big(\sqrt{\mathcal{T}^{\star}}-1\big)\big(e_1I(t)+e_2D(t)\big)\\
&=\frac{\omega_1\beta }{\omega_2}\Big |\psi(t)-\frac{A}{\mu_1}\Big |+\Pi-\Sigma+\frac{1}{\omega_1 I(t)+\omega_2D(t)}\big(\sqrt{\mathcal{T}^{\star}}-1\big)\big(\omega_1(\mu_2+\gamma)I(t)+\eta\omega_2D(t)\big)\\&\leq \Upsilon+\Pi-\Sigma+\frac{\omega_1\beta }{\omega_2}\Big |\psi(t)-\frac{A}{\mu_1}\Big |.
\end{align*}
Hence, we deduce that
\begin{align*}
\dt\ln \mathcal{M}(I(t),\mathcal{D}(t))&\leq \big(\Upsilon+\Pi-\Sigma\big)\dt t+\frac{\omega_1\beta }{\omega_2}\Big |\psi(t)-\frac{A}{\mu_1}\Big |\textup{d}t\\&\;\;\;+\frac{1}{\omega_1 I(t)+\omega_2D(t)}\Big\{\omega_1\sigma_2I(t)\dt \mathcal{W}_2(t)+\omega_2\sigma_4D(t)\dt \mathcal{W}_4(t)\Big\}\\&\;\;\;+\int_{\mathcal{U}}\ln\big(1+\lambda(u)\big)\widetilde{\mathcal{N}}(\textup{d}t,\textup{d}u),
\end{align*}
where $\lambda(u)=\max\{\lambda_2(u),\lambda_4(u)\}$. Now, by integrating both sides of the last inequality and dividing by $t$, we find immediately that
\begin{align}
t^{-1}~\ln \mathcal{M}(I(t),\mathcal{D}(t))&\leq t^{-1}~\ln \mathcal{M}(I(0),\mathcal{D}(0))+\Upsilon+\Pi-\Sigma\nonumber\\&\;\;\;+\frac{\omega_1\beta }{\omega_2 \textcolor{black}{t} }\int^t_0\Big |\psi(s)-\frac{A}{\mu_1}\Big |\textup{d}s+t^{-1}~\mathcal{J}_3(t)+t^{-1}~\mathcal{J}_4(t),
\label{mint}
\end{align}
where
\begin{align*}
\color{black}{\mathcal{J}_3(t)}&\color{black}{=\int^t_0 \frac{\sigma_2\omega_1I(s)}{\omega_1 I(s)+\omega_2D(s)} \dt\mathcal{W}_2(s)+\int^t_0\frac{\sigma_4\omega_2D(s)}{\omega_1 I(s)+\omega_2D(s)} \dt \mathcal{W}_4(s),}\\
\mathcal{J}_4(t)&=\int^t_0\int_{\mathcal{U}}\ln\big(1+\lambda(u)\big)\widetilde{\mathcal{N}}(\textup{d}s,\textup{d}u).
\end{align*}
\textcolor{black}{It is easy to check that $\mathcal{J}_3(t)$ is a local martingale with finite quadratic variation, and by the hypothesis \textbf{($\mathcal{H}_3$)} we can affirm that $\mathcal{J}_4(t)$ is also a local martingale with finite quadratic variation. By the strong law of large numbers for local martingales \cite{mo}, we get}
\begin{align*}
\underset{t\to\infty}{\lim}~t^{-1}~\mathcal{J}_3(t)=0 \;\;\;\mbox{a.s}\;\;\;\;\mbox{and}\;\;\;\;\underset{t\to\infty}{\lim}~t^{-1}~\mathcal{J}_4(t)=0 \;\;\;\;\mbox{a.s\textcolor{black}{.}}
\end{align*}
Now, by using the Hölder's inequality, we deduce that
\begin{align*}
t^{-1}~\int^t_0\Big |\psi(s)-\frac{A}{\mu_1}\Big |\textup{d}s\leq ~t^{-\frac{1}{2}}\Bigg(\int^t_0\Big (\psi(s)-\frac{A}{\mu_1}\Big )^2\textcolor{black}{\textup{d}}s\Bigg)^{\frac{1}{2}}=\Bigg(t^{-1}~\int^t_0\Big(\psi^2(s)-\frac{2A}{\mu_1}\psi(s)+\Big(\frac{A}{\mu_1}\Big)^2\Big)\textup{d}s\Bigg)^{\frac{1}{2}}.
\end{align*}
It follows from Lemme \ref{lemmas} that
\begin{align*}
\underset{t\to\infty}{\lim}t^{-1}~\int^t_0\Big |\psi(s)-\frac{A}{\mu_1}\Big |\textup{d}s&\leq\Bigg(\frac{2A^2}{\mu_1\chi_2}-\frac{2A^2}{\mu_1^2}+\frac{A^2}{\mu_1^2}\Bigg)^{\frac{1}{2}}=\left(\frac{A^2\Big(\sigma_1^2+\int_{\mathcal{U}}\lambda_1^2(u)\nu(\textup{d}u)\Big)}{\mu_1^2\chi_2}\right)^{\frac{1}{2}}.
\end{align*}
Taking the superior limit on both sides of \eqref{mint} leads to
\begin{align*}
\underset{t\to\infty}{\lim\sup}~t^{-1}~\ln \mathcal{ M}(I(t),\mathcal{D}(t))&\leq \Upsilon+\Pi-\Sigma+\frac{\omega_1\beta }{\omega_2 }\left(\frac{A^2\Big(\sigma_1^2+\int_{\mathcal{U}}\lambda_1^2(u)\nu(\textup{d}u)\Big)}{\mu_1^2\chi_2}\right)^{\frac{1}{2}}\hspace*{0.3cm}\mbox{a.s.}
\end{align*}
Which implies,
\begin{align*}
\underset{t\to\infty}{\lim\sup}~t^{-1}~\ln \mathcal{M}(I(t),\mathcal{D}(t))&\leq  \Upsilon+\Pi-\Sigma+\eta\Bigg(\frac{\mathcal{T}^{\star}\Lambda}{\chi_2}\Bigg)^{\frac{1}{2}}=\Theta\hspace*{0.3cm}\mbox{a.s.}
\end{align*}
That is to say, if $\Theta<0$, then $\underset{t\to\infty}{\lim\sup}~t^{-1}~\ln I(t)<0$, and $\underset{t\to\infty}{\lim\sup}~t^{-1}~\ln \mathcal{D}(t)<0$ a.s., which implies in turn that the disease will die out with probability one and this completes the proof.
\end{proof}
\begin{rem}
When the jump\textcolor{black}{s} coefficients $\lambda_i(u)$ ($i=1,2,4$) and the white intensities $\sigma_i$ ($i=2,4$) degenerate to zero, our results in Theorem \ref{extinction} coincide with Theorem 2.3 in \cite{da}.  Therefore, our results generalize the consequence of the mentioned paper.
\end{rem}
\subsection{Persistence in mean of the epidemic}\label{sec3}
The study of the persistence in the mean is a significant characteristic to know more about epidemic dynamics. For this reason, in this section, we will give the condition for the disease persistence, but before stating the main result, we shall first recall the concept of persistence in the mean.
\begin{defn}
An infected population $\mathcal{I}(t)$ is said to be be strongly persistent in the mean, or just persistent in the mean, if $\dis\underset{t\to\infty}{\lim\inf} ~t^{-1}~\int^t_0\mathcal{I}(s)\dt s\textcolor{black}{>0}$ almost surely.
\end{defn}
For simplicity of notation, we define the following quantity
\begin{align*}
\widetilde{\mathcal{T}^{\star}}=\beta\Bigg(\frac{A}{\mu_1+\bar{\sigma}_1}\Bigg)\Bigg((\mu_2+\gamma+\bar{\sigma}_2)+\beta \left(\frac{A}{\mu_1+\bar{\sigma}_1}\right)\frac{\bar{\sigma}_4}{\eta}\Bigg)^{-1},
\end{align*}
where $\dis\bar{\sigma}_i\triangleq0.5\sigma_i^2+\int_{\mathcal{U}}\Big(\lambda_i(u)-\ln(1+\lambda_i(u)\Big)\nu(\textup{d}u)$, $i=1,2,4$.
\begin{thm}
Let $(S(t),I(t),\mathcal{D}(t))$ be the solution of (\ref{s2}) with any initial data $(S(0),I(0),\mathcal{D}(0))\in\R^3_+$. The stochastic model (\ref{s2}) has the following property: if $\widetilde{\mathcal{T}^{\star}}>1$ holds, then the disease $I(t)$ persists in the mean almost surely.
\label{persistence}
\end{thm}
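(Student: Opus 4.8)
The plan is to build a single logarithmic functional that isolates the effective reproduction number $\widetilde{\mathcal{T}^{\star}}$, the key being the choice of weights that make the noise-corrected threshold appear. Writing $S^{\star}=\frac{A}{\mu_1+\bar{\sigma}_1}$ for the corrected disease-free level of susceptibles, I would set $V(t)=\ln I(t)+\frac{\beta S^{\star}}{\eta}\ln \mathcal{D}(t)$. Applying the generalized It\^o formula with L\'evy jumps to each logarithm and collecting the compensator contributions, the jump--diffusion corrections assemble exactly into $\bar{\sigma}_2$ and $\bar{\sigma}_4$, giving
\[
\mathcal{L}V=\frac{\beta S\mathcal{D}}{I}+\beta S^{\star}\frac{I}{\mathcal{D}}-(\mu_2+\gamma+\bar{\sigma}_2)-\beta S^{\star}-\frac{\beta S^{\star}\bar{\sigma}_4}{\eta}.
\]
The elementary inequality $\frac{\beta S\mathcal{D}}{I}+\beta S^{\star}\frac{I}{\mathcal{D}}\ge 2\beta\sqrt{S^{\star}S}$ reduces everything to controlling the time average of $\sqrt{S}$: after integrating, dividing by $t$ and discarding the martingale parts, I expect
\[
\frac{V(t)-V(0)}{t}\ge 2\beta\sqrt{S^{\star}}\,t^{-1}\!\int_0^t\!\sqrt{S(s)}\,\textup{d}s-(\mu_2+\gamma+\bar{\sigma}_2)-\beta S^{\star}-\frac{\beta S^{\star}\bar{\sigma}_4}{\eta}+o(1).
\]

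The downward control of the susceptibles is the crux, and I would route it through the logarithmic $S$-equation. Integrating $\textup{d}\ln S$ gives $A\,t^{-1}\!\int_0^t S^{-1}\textup{d}s=(\mu_1+\bar{\sigma}_1)+\beta\,t^{-1}\!\int_0^t\mathcal{D}\,\textup{d}s+o(1)$, where the remainder gathers $t^{-1}\ln S(t)$ (whose $\limsup$ is $\le 0$ since $t^{-1}S(t)\to0$) and the martingale terms, all controlled by Lemma~\ref{lem1m} and the strong law for local martingales under \textbf{($\mathcal{H}_3$)}. Two applications of Jensen's inequality (convexity of $x\mapsto x^{-1}$ and concavity of $x\mapsto\sqrt{x}$, equivalently Cauchy--Schwarz) yield $t^{-1}\!\int_0^t\sqrt{S}\,\textup{d}s\ge\big(t^{-1}\!\int_0^t S^{-1}\textup{d}s\big)^{-1/2}$, hence
\[
t^{-1}\!\int_0^t\sqrt{S(s)}\,\textup{d}s\ \ge\ \sqrt{\frac{A}{(\mu_1+\bar{\sigma}_1)+\beta\,t^{-1}\!\int_0^t\mathcal{D}\,\textup{d}s}}+o(1).
\]
Separately, integrating the raw $\mathcal{D}$-equation and using $t^{-1}\mathcal{D}(t)\to0$ together with Lemma~\ref{lem1m} gives $t^{-1}\!\int_0^tI\,\textup{d}s=t^{-1}\!\int_0^t\mathcal{D}\,\textup{d}s+o(1)$, so that persistence of $\mathcal{D}$ in the mean is equivalent to that of $I$.

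Substituting the susceptible bound into the estimate for $V$ produces a self-referential inequality in the running average. With $x_t:=t^{-1}\!\int_0^t\mathcal{D}\,\textup{d}s$ and the strictly decreasing function $g(x)=2\beta S^{\star}\sqrt{\frac{\mu_1+\bar{\sigma}_1}{(\mu_1+\bar{\sigma}_1)+\beta x}}-\big[(\mu_2+\gamma+\bar{\sigma}_2)+\beta S^{\star}+\frac{\beta S^{\star}\bar{\sigma}_4}{\eta}\big]$, one gets $t^{-1}(V(t)-V(0))\ge g(x_t)+o(1)$. Since $t^{-1}I(t)\to0$ and $t^{-1}\mathcal{D}(t)\to0$ force $\limsup_{t\to\infty}t^{-1}V(t)\le0$, evaluating along a sequence realizing $\liminf_{t\to\infty}x_t$ and using continuity of $g$ gives $g(\liminf_{t\to\infty}x_t)\le0$. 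Now $g(0)=\beta S^{\star}-(\mu_2+\gamma+\bar{\sigma}_2)-\frac{\beta S^{\star}\bar{\sigma}_4}{\eta}$, which is strictly positive precisely when $\widetilde{\mathcal{T}^{\star}}>1$; as $g$ is strictly decreasing with $g(+\infty)<0$, the inequality $g(\liminf_{t\to\infty}x_t)\le0$ forces $\liminf_{t\to\infty}x_t\ge x^{\star}>0$, where $x^{\star}$ is the unique root of $g$. Combined with the equivalence $\langle I\rangle=\langle\mathcal{D}\rangle+o(1)$, this yields $\liminf_{t\to\infty}t^{-1}\!\int_0^tI\,\textup{d}s\ge x^{\star}>0$ almost surely, i.e.\ persistence in the mean.

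The principal obstacle, as signalled above, is exactly the lower control of $S$: the infection term $\beta S\mathcal{D}/I$ demands a lower estimate for the susceptibles, which cannot be supplied by the comparison process $\psi$ of \eqref{s3} (that only bounds $S$ from above, as used in Theorem~\ref{extinction}). Passing instead through the logarithmic $S$-equation and convexity is what forces the corrected level $S^{\star}=A/(\mu_1+\bar{\sigma}_1)$ to surface and what turns the estimate into a self-consistent inequality in $x_t$ that must be solved rather than merely bounded; additional care is needed to verify that every boundary and martingale remainder is genuinely $o(1)$ through \textbf{($\mathcal{H}_3$)}--\textbf{($\mathcal{H}_5$)} and Lemma~\ref{lem1m}, and to carry out the $\liminf/\limsup$ bookkeeping along subsequences rather than presuming the time averages converge.
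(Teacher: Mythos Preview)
Your argument is correct, but the route differs substantially from the paper's. The paper works with the single functional
\[
\mathcal{Z}(S,I,\mathcal{D})=-\mathfrak{c}_1\ln S-\ln I-\mathfrak{c}_2\ln\mathcal{D}+\mathfrak{c}_3\mathcal{D},
\]
so that the term $\mathfrak{c}_1 A/S$ appears directly inside $\mathcal{L}\mathcal{Z}$. A three-term AM--GM
\[
\frac{\beta S\mathcal{D}}{I}+\frac{\mathfrak{c}_1 A}{S}+\frac{\mathfrak{c}_2\eta I}{\mathcal{D}}\ \ge\ 3(\beta A\eta\,\mathfrak{c}_1\mathfrak{c}_2)^{1/3}
\]
then yields a \emph{constant} lower bound, and the choice $\mathfrak{c}_3=\mathfrak{c}_1\beta/\eta$ kills the residual $\mathcal{D}$-term. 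With the specific $\mathfrak{c}_1,\mathfrak{c}_2$ in the paper this collapses to $\mathcal{L}\mathcal{Z}\le-\beta S^{\star}(1-1/\widetilde{\mathcal{T}^{\star}})+\mathfrak{c}_1\beta I$, and since $\liminf_{t\to\infty}t^{-1}\mathcal{Z}(t)\ge0$ one reads off the explicit bound
\[
\liminf_{t\to\infty}t^{-1}\!\int_0^t I\,\textup{d}s\ \ge\ \mathfrak{c}_1^{-1}S^{\star}\big(1-1/\widetilde{\mathcal{T}^{\star}}\big).
\]
You instead keep $S$ out of the Lyapunov function, apply only a two-term AM--GM, and then recover the missing $A/S$ contribution externally through the $\ln S$ equation combined with the power-mean inequality $M_{1/2}\ge M_{-1}$. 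This is sound --- the estimate $t^{-1}\!\int\sqrt{S}\ge(t^{-1}\!\int S^{-1})^{-1/2}$ is exactly $M_{1/2}(S)\ge M_{-1}(S)$, and the $\ln S$ balance gives the required upper bound on $t^{-1}\!\int S^{-1}$ since $\limsup t^{-1}\ln S(t)\le0$ --- but the price is a nonlinear self-referential inequality in $x_t=t^{-1}\!\int\mathcal{D}$ that must be resolved through the root $x^{\star}$ of your function $g$. The paper's device of folding $-\mathfrak{c}_1\ln S$ into the Lyapunov function linearises everything at once, avoids the power-mean and subsequence bookkeeping, and delivers an explicit persistence level; your approach reaches the same conclusion with the same threshold $\widetilde{\mathcal{T}^{\star}}$ but via an implicit bound.
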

\begin{proof}
Begin by considering the following function
\begin{align*}
\mathcal{Z}(S(t),I(t),\mathcal{D}(t))=-\mathfrak{c}_1\ln S(t)-\ln I(t)-\mathfrak{c}_2 \ln \mathcal{D}(t)+\mathfrak{c}_3 \mathcal{D}(t).
\end{align*}
where $\mathfrak{c}_i$, $(i=1,2,3)$ are positive constants to be determined in the following. From Itô’s formula and system \eqref{s2}, we have
\begin{align*}
\dt\mathcal{Z}(S(t),I(t),\mathcal{D}(t))&=\mathcal{L}\mathcal{Z}(S(t),I(t),\mathcal{D}(t))\textup{d}t-\mathfrak{c}_1\sigma_1 \dt \mathcal{W}_1(t)-\sigma_2\dt \mathcal{W}_2(t)-\mathfrak{c}_2\sigma_4 \dt \mathcal{W}_4(t)\\&\;\;\;+\mathfrak{c}_3\sigma_4D(t)\dt \mathcal{W}_4(t)-\int_{\mathcal{U}}\mathfrak{c}_1\ln\big(1+\lambda_1(u)\big)\widetilde{\mathcal{N}}(\textup{d}t,\textup{d}u)-\int_{\mathcal{U}}\ln\big(1+\lambda_2(u)\big)\widetilde{\mathcal{N}}(\textup{d}t,\textup{d}u)\\&\;\;\;-\int_{\mathcal{U}}\mathfrak{c}_2\ln\big(1+\lambda_4(u)\big)\widetilde{\mathcal{N}}(\textup{d}t,\textup{d}u)+\int_{\mathcal{U}}\mathfrak{c}_3\lambda_4(u)\mathcal{D}(t^-)\widetilde{\mathcal{N}}(\textup{d}t,\textup{d}u),
\end{align*}
where
\begin{align*}
\mathcal{L}\textcolor{black}{\mathcal{Z}}(S(t),I(t),\mathcal{D}(t))&=-\frac{\mathfrak{c}_1}{S(t)}(A-\mu_1 S(t)-\beta S(t) \mathcal{D}(t))+\frac{\mathfrak{c}_1\sigma_1^2}{2}-\frac{1}{I(t)}\big(\beta S(t)\mathcal{D}(t)-(\mu_2+\gamma)I(t)\big)\\&\;\;\;+\frac{\sigma_2^2}{2}-\frac{\mathfrak{c}_2\eta}{\mathcal{D}(t)}(I(t)-\mathcal{D}(t))+\frac{\mathfrak{c}_2\sigma_4^2}{2}+\mathfrak{c}_3\eta\big(I(t)-\mathcal{D}(t)\big)\\&\;\;\;+\int_{\mathcal{U}}\mathfrak{c}_1\big(\lambda_1(u)-\ln(1+\lambda_1(u)\big)\nu(\textup{d}u)+\int_{\mathcal{U}}\big(\lambda_2(u)-\ln(1+\lambda_2(u)\big) \nu(\textup{d}u)\\&\;\;\;+\int_{\mathcal{U}}\mathfrak{c}_2\big(\lambda_4(u)-\ln(1+\lambda_4(u)\big)\nu(\textup{d}u).
\end{align*}
We then find that
\begin{align*}
\mathcal{L}\mathcal{Z}(S(t),I(t),\mathcal{D}(t))&=-\frac{\beta S(t)\mathcal{D}(t)}{I(t)}-\frac{\mathfrak{c}_1A}{S(t)}-\frac{\mathfrak{c}_2\eta I(t)}{\mathcal{D}(t)}+(\mathfrak{c}_1\beta-\mathfrak{c}_3\eta)\mathcal{D}(t)\\&\;\;\;+\mathfrak{c}_1(\mu_1+\bar{\sigma}_1)+\mathfrak{c}_2(\eta+\bar{\sigma}_4)+(\mu_2+\gamma+\bar{\sigma}_2)+\mathfrak{c}_3\eta I(t)\\&\leq -3\big(\beta A \eta \mathfrak{c}_1 \mathfrak{c}_2\big)^{\frac{1}{3}}+(\mathfrak{c}_1\beta-\mathfrak{c}_3\eta)\mathcal{D}(t)+\mathfrak{c}_1(\mu_1+\bar{\sigma}_1)\\&\;\;\;+\mathfrak{c}_2(\eta+\bar{\sigma}_4)+(\mu_2+\gamma+\bar{\sigma}_2)+\mathfrak{c}_3\eta I(t).
\end{align*}
By choosing 
\begin{align*}
\mathfrak{c}_1&=\beta\Big(\frac{A}{\mu_1+\bar{\sigma}_1}\Big)^2(\eta+\bar{\sigma}_4)/A\eta,\\
\mathfrak{c}_2&=\beta\Big(\frac{A}{\mu_1+\bar{\sigma}_1}\Big)/(\eta+\bar{\sigma}_4),\\
\mathfrak{c}_3&=\mathfrak{c}_1\beta/\eta,
\end{align*}
we may actually obtain that
\begin{align*}
\mathcal{L}\mathcal{Z}(S(t),I(t),\mathcal{D}(t))&\leq -\beta\Big(\frac{A}{\mu_1+\bar{\sigma}_1}\Big)+(\mu_2+\gamma+\bar{\sigma}_2)+\beta \Big(\frac{A}{\mu_1+\bar{\sigma}_1}\Big)\frac{\bar{\sigma}_4}{\eta}+\mathfrak{c}_1\beta I(t)\\&=-\beta\Big(\frac{A}{\mu_1+\bar{\sigma}_1}\Big)\left(1-\frac{1}{\widetilde{\mathcal{T}^{\star}}}\right)+\mathfrak{c}_1\beta I(t).
\end{align*}
Hence, we get
\begin{align*}
\dt \mathcal{Z}(S(t),I(t),\mathcal{D}(t))&\leq \Bigg(-\beta\Big(\frac{A}{\mu_1+\bar{\sigma}_1}\Big)\left(1-\frac{1}{\widetilde{\mathcal{T}^{\star}}}\right)+\mathfrak{c}_1\beta I(t)\Bigg) \textup{d}t-\mathfrak{c}_1\sigma_1 \dt \mathcal{W}_1(t)-\sigma_2\dt \mathcal{W}_2(t)\\&\;\;\;-\mathfrak{c}_2\sigma_4 \dt \mathcal{W}_4(t)+\mathfrak{c}_3\sigma_4D(t)\dt \mathcal{W}_4(t)-\int_{\mathcal{U}}\mathfrak{c}_1\ln\big(1+\lambda_1(u)\big)\widetilde{\mathcal{N}}(\textup{d}t,\textup{d}u)\\&\;\;\;-\int_{\mathcal{U}}\ln\big(1+\lambda_2(u)\big)\widetilde{\mathcal{N}}(\textup{d}t,\textup{d}u)-\int_{\mathcal{U}}\mathfrak{c}_2\ln\big(1+\lambda_4(u)\big)\widetilde{\mathcal{N}}(\textup{d}t,\textup{d}u)\\&\;\;\;+\int_{\mathcal{U}}\mathfrak{c}_3\lambda_4(u)\mathcal{D}(t^-)\widetilde{\mathcal{N}}(\textup{d}t,\textup{d}u).
\end{align*}
Integrating from $0$ to $t$ and dividing by $t$ on both sides of the last inequality, yields
\begin{align*}
~t^{-1}~\mathcal{Z}(S(t),I(t),\mathcal{D}(t))&\leq ~t^{-1}~\mathcal{Z}(S(0),I(0),\mathcal{D}(0))-\beta\Big(\frac{A}{\mu_1+\bar{\sigma}_1}\Big)\left(1-\frac{1}{\widetilde{\mathcal{T}^{\star}}}\right)\\&\;\;\;+\mathfrak{c}_1\beta~t^{-1}~\int^t_0 I(s) \textup{d}s+~t^{-1}~\mathcal{J}_5(t)+~t^{-1}~\mathcal{J}_6(t),
\end{align*}
where
\begin{align*}
\mathcal{J}_5(t)&=-\Big(\mathfrak{c}_1\sigma_1 \mathcal{W}_1(t)+\sigma_2 \mathcal{W}_2(t)+\mathfrak{c}_2\sigma_4 \mathcal{W}_4(t)\Big)+\mathfrak{c}_3\sigma_4\int^t_0D(s)\dt \mathcal{W}_4(s),\\
\mathcal{J}_6(t)&=-\textcolor{black}{\int_0^t}\int_{\mathcal{U}}\mathfrak{c}_1\ln\big(1+\lambda_1(u)\big)\widetilde{\mathcal{N}}(\textup{d}t,\textup{d}u)-\textcolor{black}{\int_0^t}\int_{\mathcal{U}}\ln\big(1+\lambda_2(u)\big)\widetilde{\mathcal{N}}(\textup{d}t,\textup{d}u)\\&\;\;\;-\textcolor{black}{\int_0^t}\int_{\mathcal{U}}\mathfrak{c}_2\ln\big(1+\lambda_4(u)\big)\widetilde{\mathcal{N}}(\textup{d}t,\textup{d}u)+\textcolor{black}{\int_0^t}\int_{\mathcal{U}}\mathfrak{c}_3\lambda_4(u)\mathcal{D}(t^-)\widetilde{\mathcal{N}}(\textup{d}t,\textup{d}u).
\end{align*}
By using, the strong law of large numbers for local martingales and Lemma \ref{lem1m}, we can obtain 
\begin{align*}
\underset{t\to\infty}{\lim}~t^{-1}~\mathcal{J}_5(t)=0 \;\;\;\mbox{a.s}\;\;\;\;\mbox{and}\;\;\;\;\underset{t\to\infty}{\lim}~t^{-1}~\mathcal{J}_6(t)=0 \;\;\;\;\mbox{a.s}
\end{align*}
 Therefore
\begin{align*}
\underset{t\to\infty}{\lim\inf}~t^{-1}~\int^t_0 I(s) \textup{d}s\geq \frac{1}{\mathfrak{c}_1}\Big(\frac{A}{\mu_1+\bar{\sigma}_1}\Big)\left(1-\frac{1}{\widetilde{\mathcal{T}^{\star}}}\right)>0\hspace*{0.5cm}\mbox{a.s.}
\end{align*}
This shows that the disease \textcolor{black}{persists} in the mean \textcolor{black}{as claimed}.
\end{proof}
\begin{rem}
Persistence in the mean is an important concept in mathematical epidemiology. It captures the long-term survival of the disease even when the population size is quite low at \textcolor{black}{$t=0$}. Moreover, the persistence of the model refers to a situation where the disease is endemic in a population. 
\end{rem}
\section{Numerical simulations}\label{sec4}
This section is devoted to illustrate our theoretical results by employing numerical simulations. In the three following examples, we apply the algorithm presented in \cite{simul} to discretize the disturbed system (\ref{s2}). Using the software \textbf{M}atlab2015b and the parameter values listed in Table \ref{value1}, we numerically simulate the solution of the system (\ref{s2}) with the initial value $ (S (0), I (0), \mathcal{D} (0)) = (0.6 \mathbf{,}\; 0.3 \mathbf{,}\; 0.05) $.  
\begin{center}
\begin{tabular}{||cl||c|c|c|c||}
\hline
 \cellcolor{gray!25} Parameters &\hspace{1.7cm}\cellcolor{gray!25}Description  &\multicolumn{4}{|c||}{\cellcolor{gray!40}Numerical values} \\
\hline \hline
$A$ & Recruitment rate & 0.9  &0.3&0.6&0.6\\
\hline
$\mu_1$ & Natural mortality rate of $S$ & 0.3&0.3&0.4&0.4\\
\hline
$\beta$ & Transmission rate  &0.07 & 1.3&0.35&0.8\\
\hline
$\gamma$ & Recovered rate &0.05& 0.05&0.2&0.3\\
\hline
$\mu_2$ &General mortality of $I$ &0.5 &0.5&0.3&0.3\\
\hline
$\eta$ &Exponentially fading memory rate &0.09 &0.09&0.7&0.2\\
\hline
$\sigma_1$ &Intensity of $\mathcal{W}_1(t)$ &0.15 &0.15&\textcolor{black}{0.2}&\textcolor{black}{0.169}\\
\hline
$\sigma_2$ &Intensity of $\mathcal{W}_2(t)$ &0.25 &0.25&\textcolor{black}{0.15}&\textcolor{black}{0.15}\\
\hline
$\sigma_4$ &Intensity of $\mathcal{W}_4(t)$ &0.27 &0.27&\textcolor{black}{0.13}&\textcolor{black}{0.13}\\
\hline
$\lambda_1$ &Jump intensity of $S$ &0.2 &0.2&\textcolor{black}{0.5}&\textcolor{black}{0.5}\\
\hline
$\lambda_2$ &Jump intensity of $I$ &0.23 &0.23&\textcolor{black}{0.3}&\textcolor{black}{0.3}\\
\hline
$\lambda_4$ &Jump intensity of $\mathcal{D}$ &0.1 &0.1&\textcolor{black}{0.7}&\textcolor{black}{0.7}\\
\hline
\multicolumn{2}{c|}{} &\cellcolor{gray!40} Figure \ref{fig1} &\cellcolor{gray!40} Figure \ref{fig2}&\cellcolor{gray!40} Figure \ref{fig3}&\cellcolor{gray!40} Figure \ref{fig4}\\ 
\cline{3-6}
\end{tabular}
\captionof{table}{\textcolor{black}{Nominal values of the system  parameters and disturbances intensities adopted in the different  simulation examples .}}
\label{value1}
\end{center}
\subsection{The stochastic extinction case}
In order to exhibit the strong random fluctuations effect on epidemic dynamics, we present in Figure \ref{fig1}, the trajectories of the stochastic solution $(S(t),I(t),\mathcal{D}(t))$. We assume that $\mathcal{U}=(0,\infty)$ and $\nu(\mathcal{U})=1$, then by using the parameters listed in Table \ref{value1}, we must check the existence of $p$ such that $\chi_{1,p}>0$. By simple calculation, we easily get $\chi_{1,p}=0.0206$ for $p=2.1$\textbf{.} Then, the condition \textbf{($\mathcal{H}_5$)} is satisfied. With the chosen parameters, we can obtain the following values:
\begin{center}
\begin{tabular}{||c||c||}
\hline
 \cellcolor{gray!25} Expression &\cellcolor{gray!25} Value \\
\hline \hline
$\mathcal{T}^{\star}\triangleq\beta A(\mu_1(\mu_2+\gamma))^{-1}$ & 0.3818\\
\hline
$\Upsilon\triangleq\min\{\mu_2+\gamma,\eta\}(\sqrt{\mathcal{T}^{\star}}-1)$ & -0.0344 \\
\hline
$\Pi\triangleq\int_{\mathcal{U}}\aleph(u)\nu(\textup{d}u)$& -0.0047\\
\hline
$\Sigma\triangleq\Big(2\big(\sigma_2^{-2}+\sigma_4^{-2}\big)\Big)^{-1}$&0.0168\\
\hline
$\Lambda\triangleq\sigma_1^2+\int_{\mathcal{U}}\lambda_1^2(u)\nu(\textup{d}u)$&  0.0625\\
\hline
$\chi_2=2\mu_1-\sigma_1^2-\int_{\mathcal{U}}\lambda_1^2(u)\nu(\textup{d}u)$&  0.5375\\
\hline
$\Theta\triangleq\Upsilon+\Pi-\Sigma+\eta\textcolor{black}{\sqrt{\mathcal{T^\star}}}\Lambda^{\frac{1}{2}}\chi_2^{-\frac{1}{2}}$&-0.0369\\
\hline
\end{tabular}
\captionof{table}{Some expressions and their corresponding values.}
\label{value12}
\end{center}
\textcolor{black}{From Table \ref{value12}, we have $\Theta<0$}, then the condition of Theorem \ref{extinction} is verified. That is to say that  the epidemic dies out exponentially almost surely \textcolor{black}{which is exactly illustrated in Figure \ref{fig1}}.
\begin{figure}[!htb]\centering
\begin{center}$
\begin{array}{cc}
\includegraphics[width=3.4in]{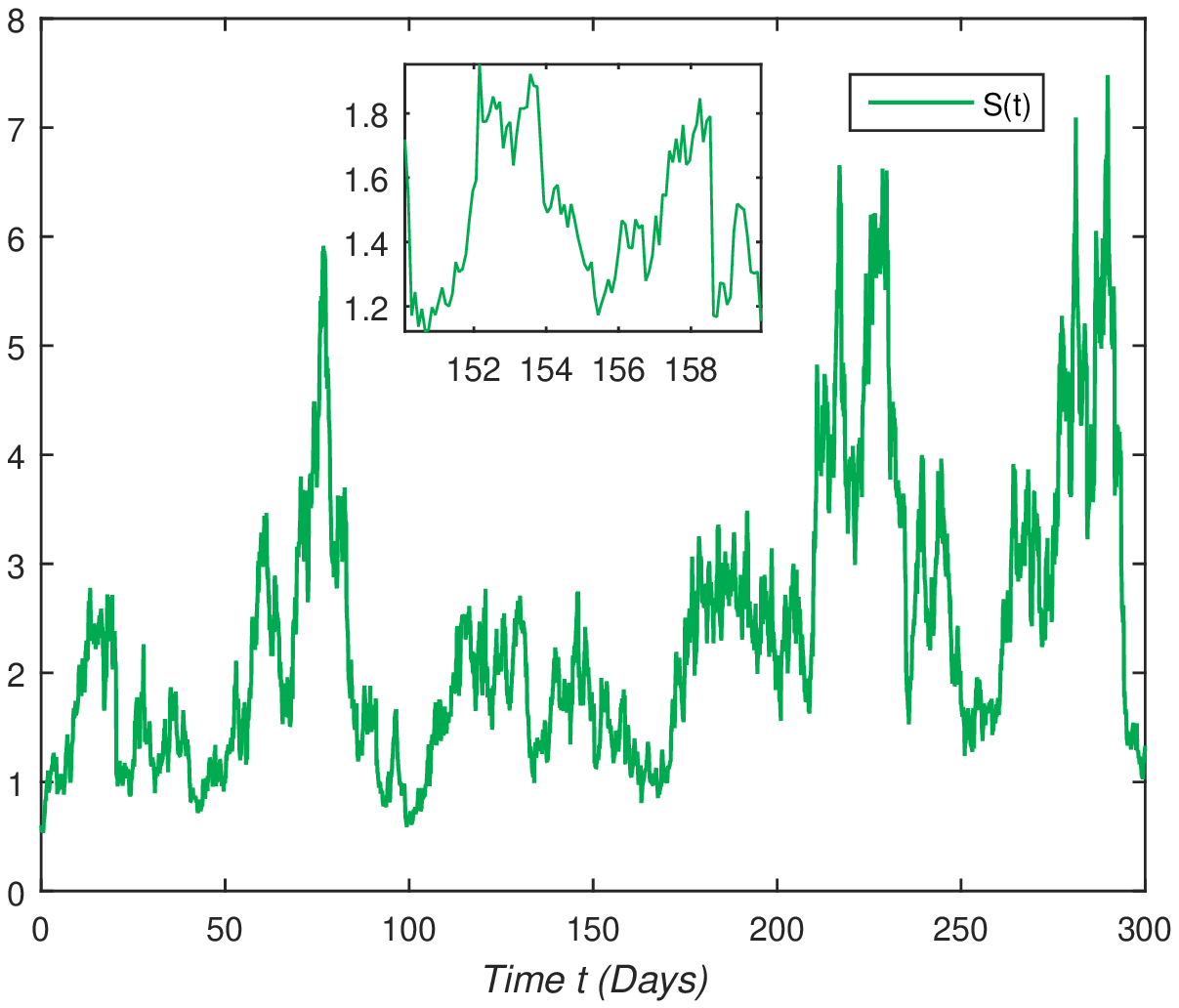} &
\includegraphics[width=3.4in]{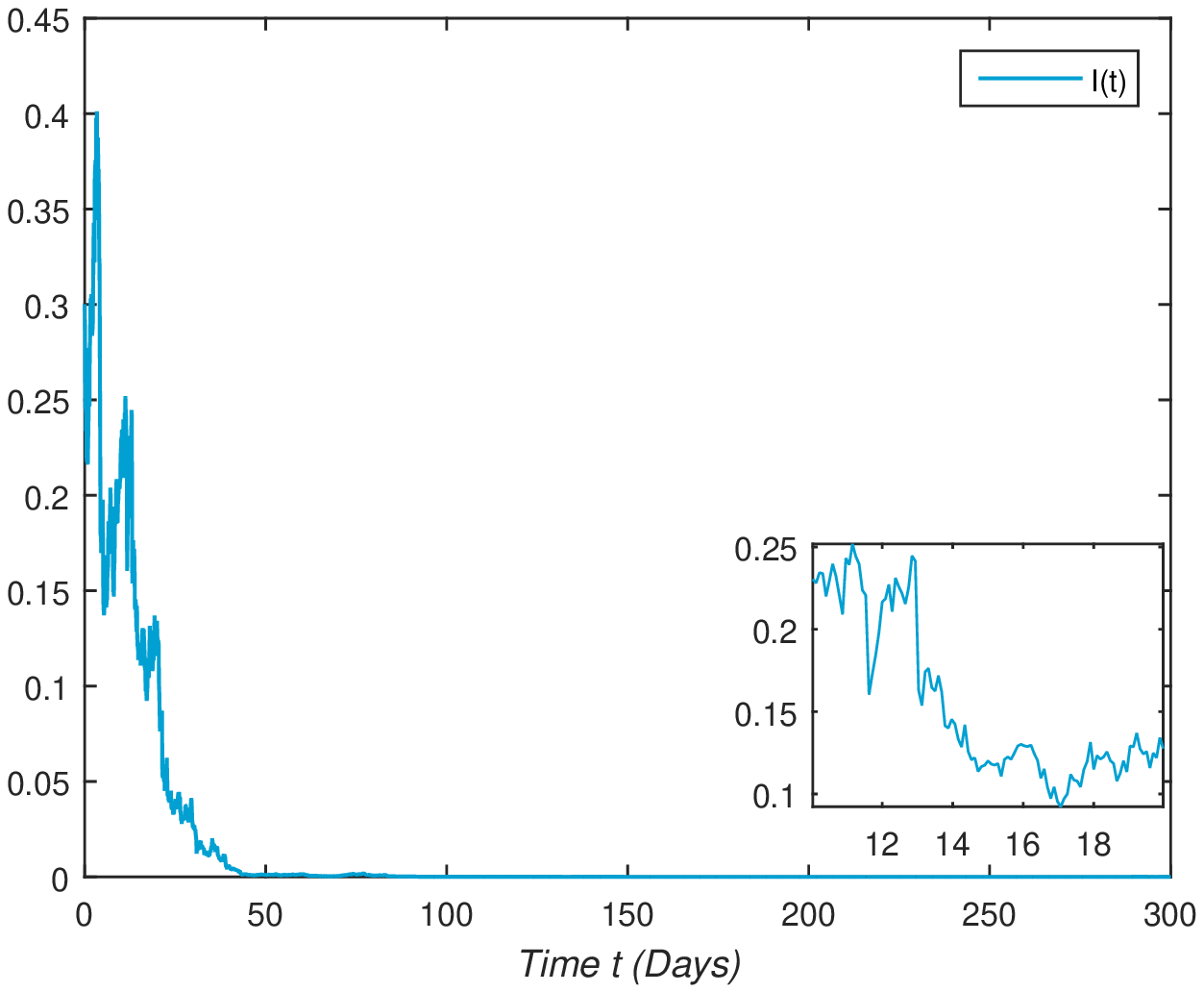}
\end{array}$
$\begin{array}{c}
\includegraphics[width=3.4in]{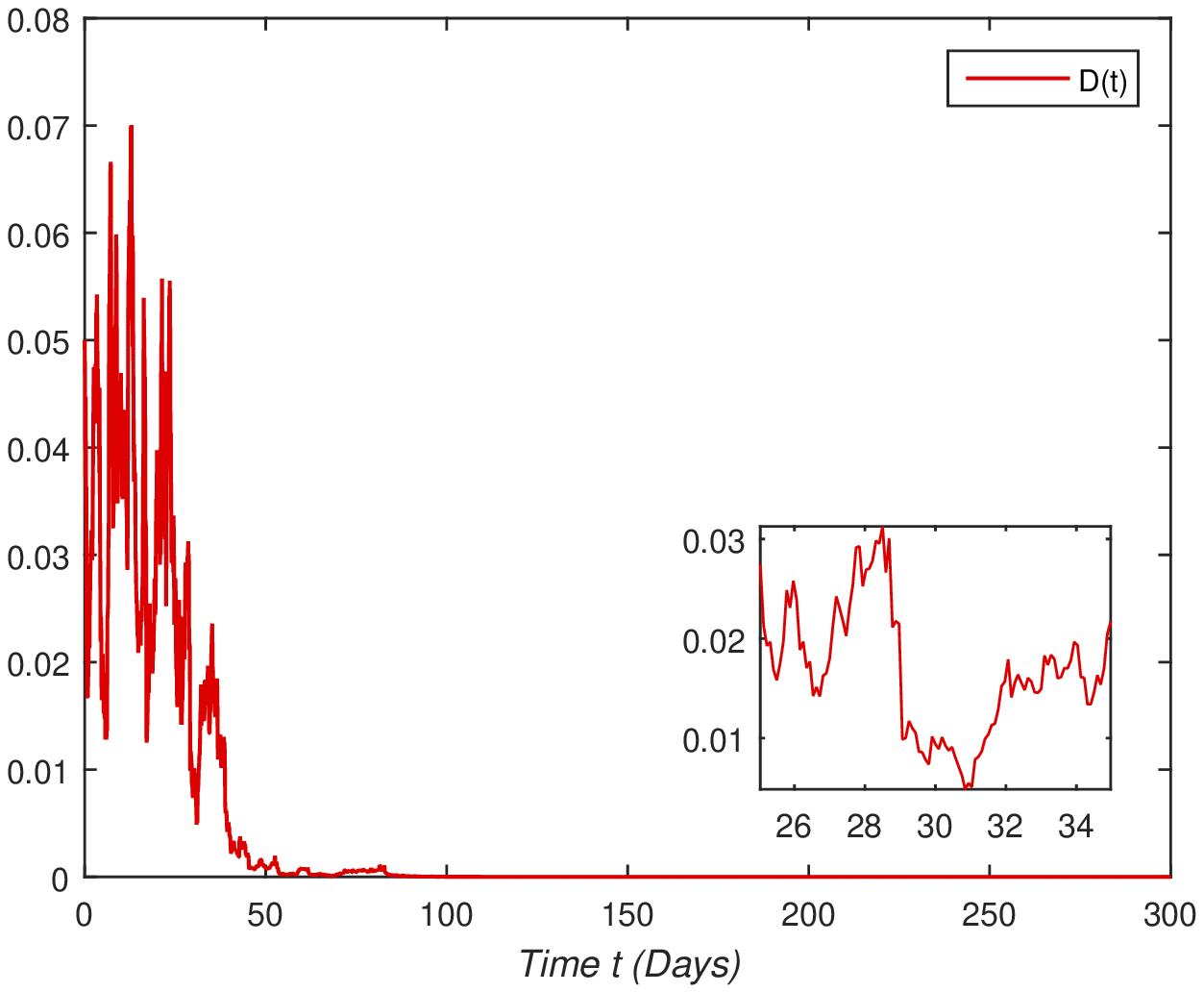}
\end{array}$
\end{center}
\caption{The paths of $S(t)$, $I(t)$ and $\mathcal{D}(t)$ for the stochastic model (\ref{s2}) when $\Theta=-0.0369<0$.}
\label{fig1}
\end{figure}
\subsection{The stochastic persistence case}
Consider the system (\ref{s2}) with parameters \textcolor{black}{appearing} in Table \ref{value1}. Then, we obtain the following values:
\begin{center}
\begin{tabular}{||c||c||}
\hline
 \cellcolor{gray!25} Expression &\cellcolor{gray!25} Value \\
\hline \hline
$\bar{\sigma}_1\triangleq 0.5\sigma_1^2+\int_{\mathcal{U}}\Big(\lambda_1(u)-\ln(1+\lambda_1(u)\Big)\nu(\textup{d}u)$ & 0.0289\\
\hline
$\bar{\sigma}_2\triangleq 0.5\sigma_2^2+\int_{\mathcal{U}}\Big(\lambda_2(u)-\ln(1+\lambda_2(u)\Big)\nu(\textup{d}u)$ &0.0542\\
\hline
$\bar{\sigma}_4\triangleq 0.5\sigma_4^2+\int_{\mathcal{U}}\Big(\lambda_4(u)-\ln(1+\lambda_4(u)\Big)\nu(\textup{d}u)$ & 0.0411\\
\hline
$\widetilde{\mathcal{T}^{\star}}\triangleq \beta\Bigg(\frac{A}{\mu_1+\bar{\sigma}_1}\Bigg)\Bigg((\mu_2+\gamma+\bar{\sigma}_2)+\beta \left(\frac{A}{\mu_1+\bar{\sigma}_1}\right)\frac{\bar{\sigma}_4}{\eta}\Bigg)^{-1}$&1.0344\\
\hline
\end{tabular}
\captionof{table}{Some expressions and their corresponding values.}
\label{value13}
\end{center}
Therefore, $\widetilde{\mathcal{T}^{\star}}>1$. From Figure \ref{fig2}, we observe the persistence of the epidemic $I(t)$ in this case, which agree well with Theorem \ref{persistence}. Furthermore, the solutions $S(t)$ and $D(t)$ are persistent which implies the non-extinction of the stochastic model \eqref{s2}.
\begin{figure}[!htb]\centering
\begin{center}$
\begin{array}{cc}
\includegraphics[width=3.4in]{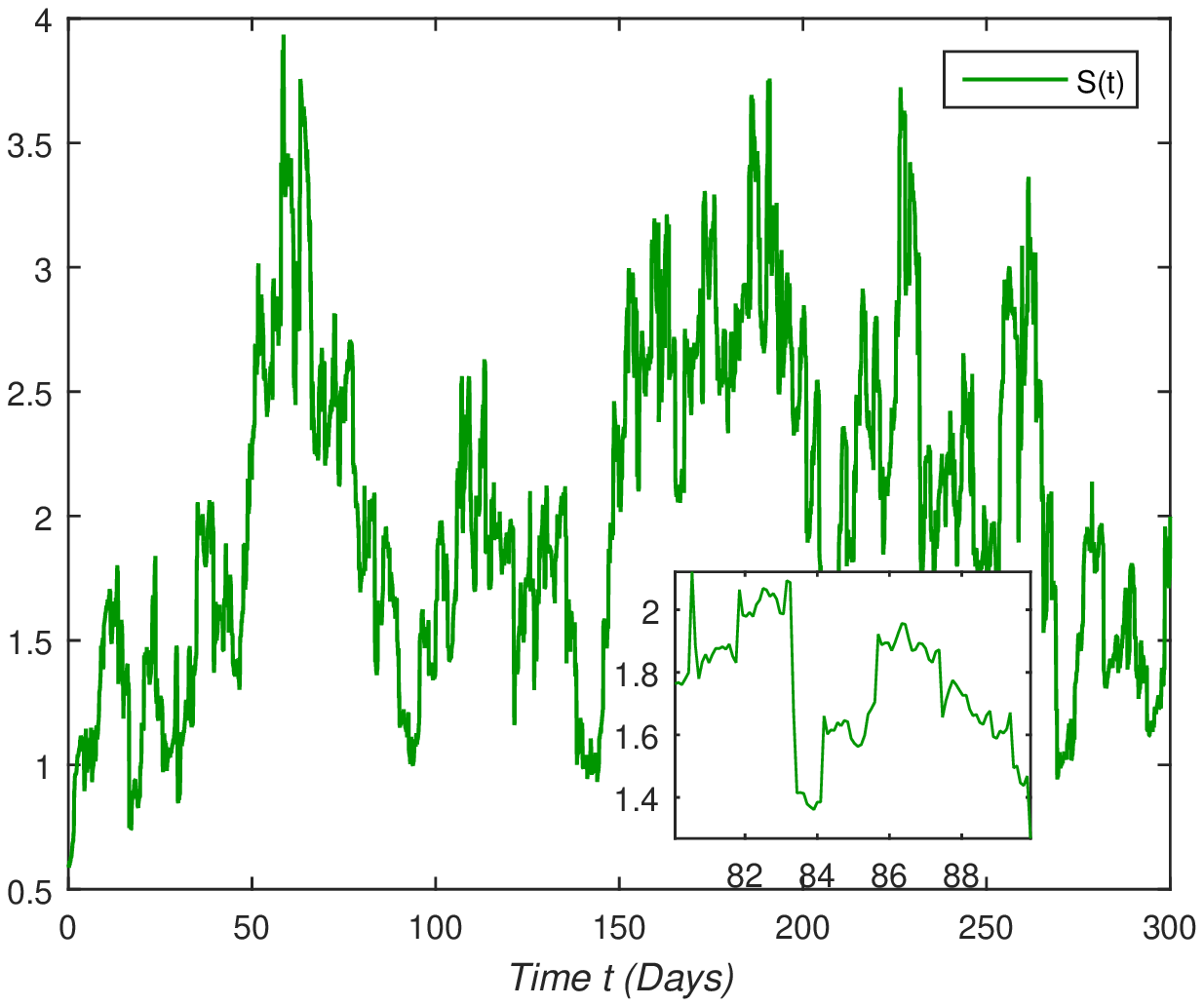} &
\includegraphics[width=3.4in]{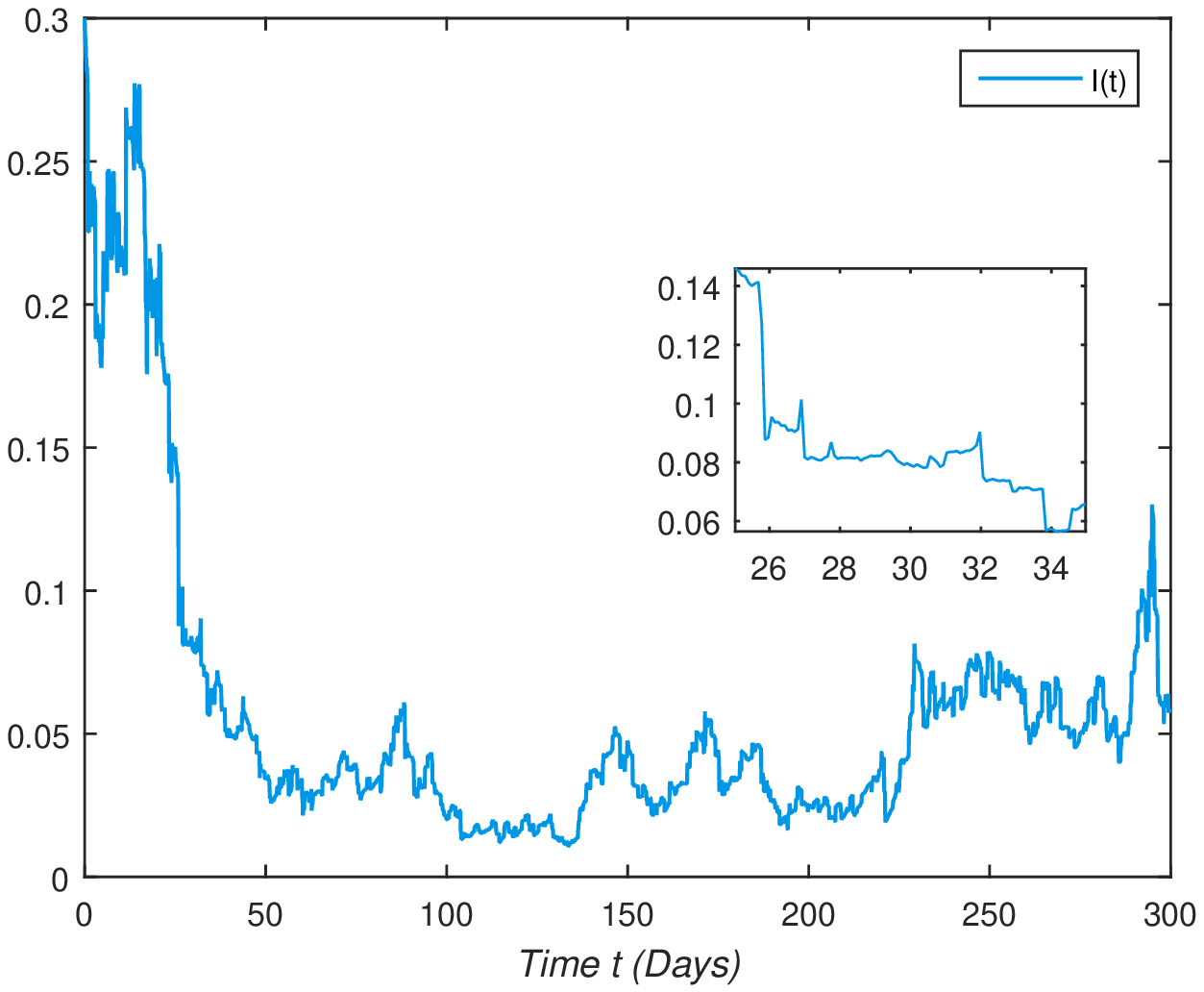}
\end{array}$
$\begin{array}{c}
\includegraphics[width=3.4in]{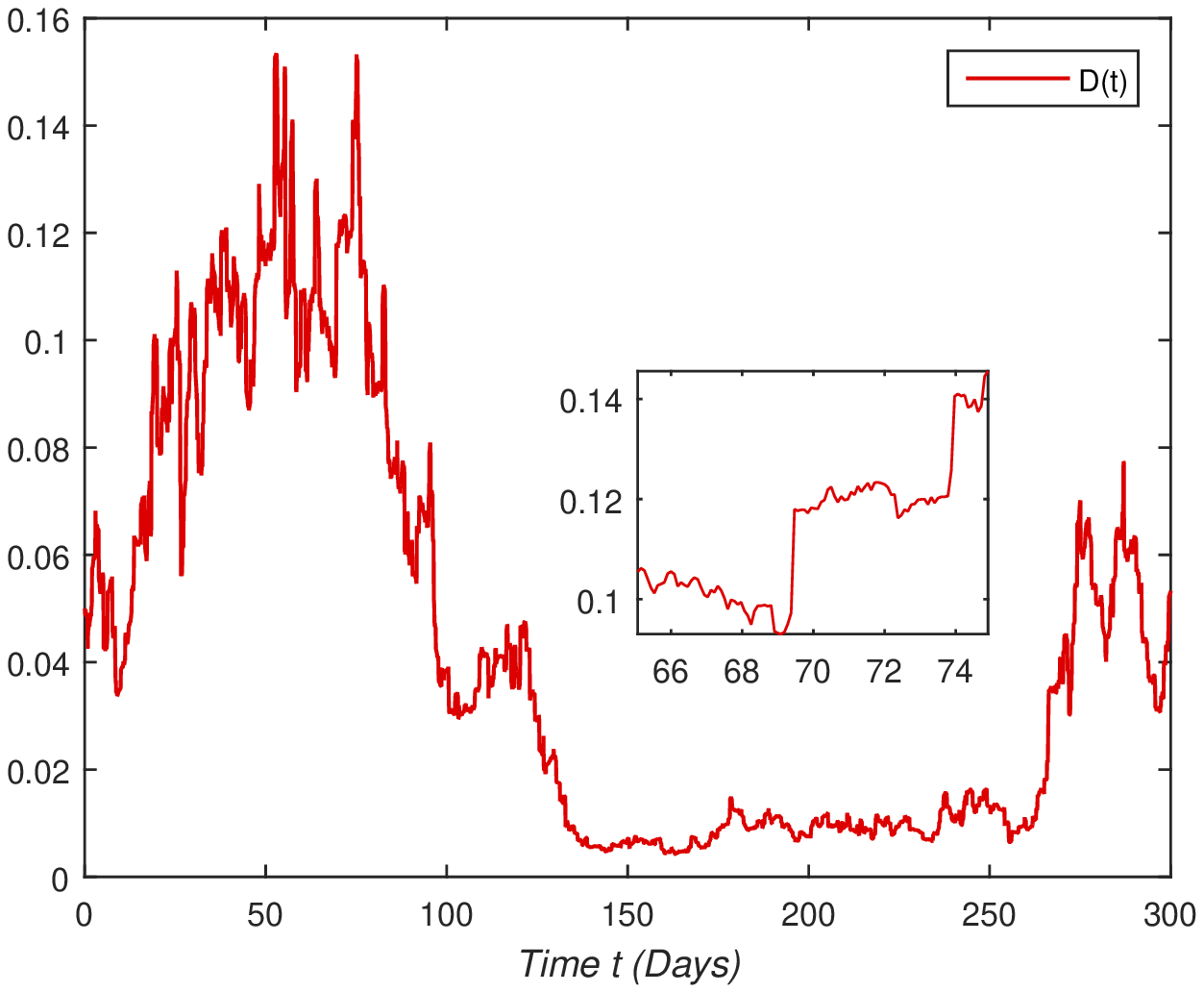}
\end{array}$
\end{center}
\caption{The paths of $S(t)$, $I(t)$ and $\mathcal{D}(t)$ for the stochastic model (\ref{s2}) when $\widetilde{\mathcal{T}^{\star}}=1.0344>1$.}
\label{fig2}
\end{figure}
\subsection{The Lévy jumps effect on the epidemic dynamics}
To find out the effect of white noise and jumps  intensities on epidemic dynamics, in this example, we will compare the trajectories of the following systems:
\begin{itemize}
\item [$\bullet$] The deterministic model \eqref{s12} ($\sigma_i=0$ and $\lambda_i=0$, $i=1,2,4$).
\item [$\bullet$] The stochastic version of \eqref{s12} with degenerate diffusion \cite{da} ($\sigma_2=\sigma_4=0$ and $\lambda_i=0$, $i=1,2,4$).
\item [$\bullet$] The SDE-J system \eqref{s2} ($\sigma_i\neq 0$ and $\lambda_i\neq 0$, $i=1,2,4$).
\end{itemize}
We take the values appearing in Table \ref{value1} which are the same as those used in \cite{da}. For the rest of parameters, we choose $\sigma_1=0.2$, $\sigma_2=0.15$, $\sigma_4=0.13$, $\lambda_1=0.5$, $\lambda_2=0.3$, and $\lambda_4=0.7$. For the sake of a comparison, we choose the following initial value used in $ (S (0), I (0), \mathcal{D} (0)) = (0.2 \mathbf{,}\; 0.3 \mathbf{,}\; 0.4) $ used in \cite{da}. We see from Figure $\ref{fig3}$ that the effects of Lévy jump\textcolor{black}{s} lead to the extinction of the disease while the deterministic model \eqref{s12} and the perturbed model driven by degenerate diffusion both predict persistence. Therefore, we say that the jumps have negative effects on the prevalence of epidemics. This means that jumps can change the asymptotic behavior of the epidemic model significantly. To examine the effect of \textcolor{black}{jumps intensities} on dynamical system \eqref{s12} in the case of persistence,  we shall decrease the intensity $\sigma_1$ to $0.169$ and take other parameter \textcolor{black}{as in the last column of Table} \ref{value1}. From Figure \ref{fig4}, we observe the persistence of the epidemic in all cases with a greater variation in the case of Lévy jumps.
\begin{figure}[!htb]\centering
\begin{center}$
\begin{array}{cc}
\includegraphics[width=3.4in]{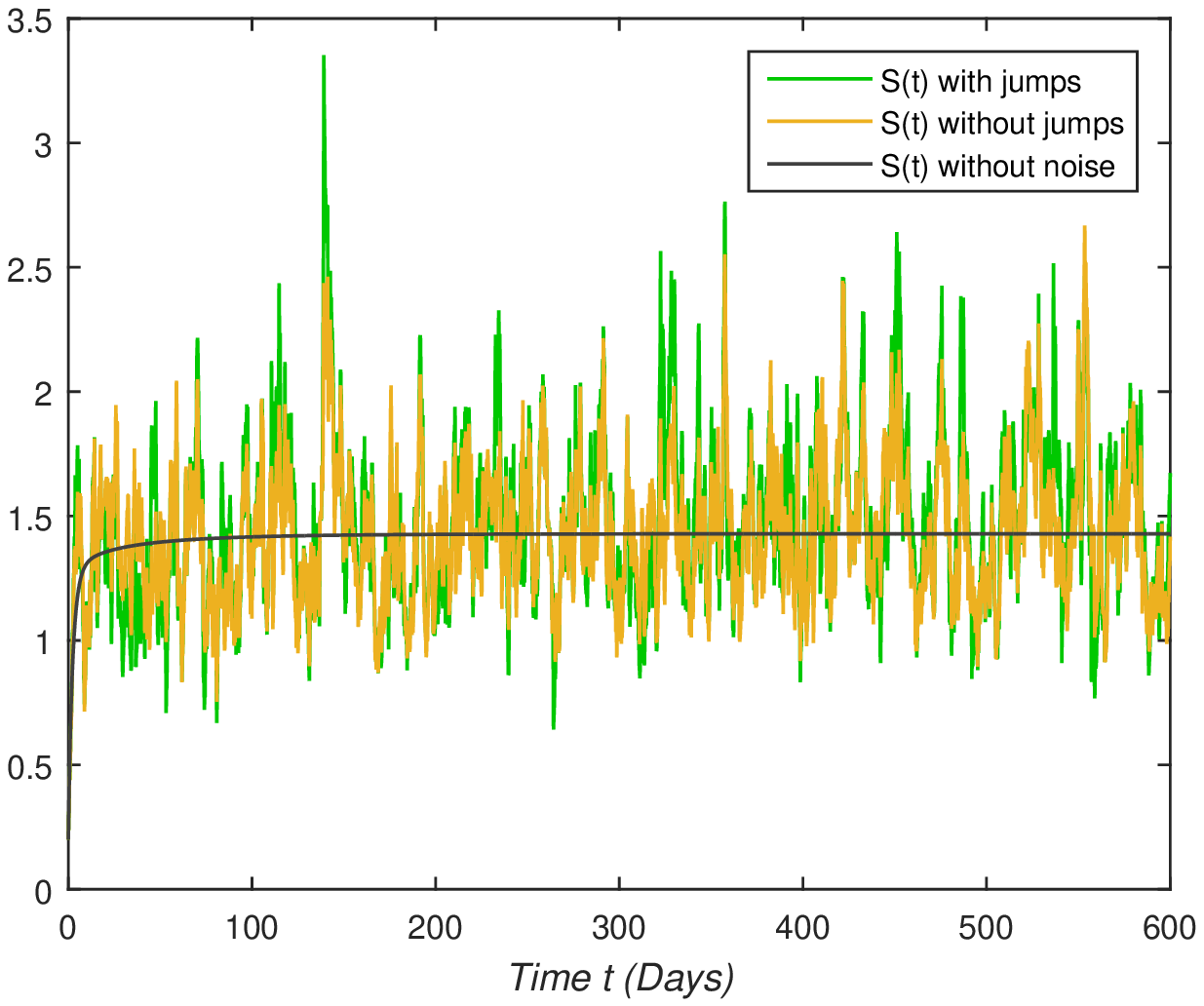} &
\includegraphics[width=3.4in]{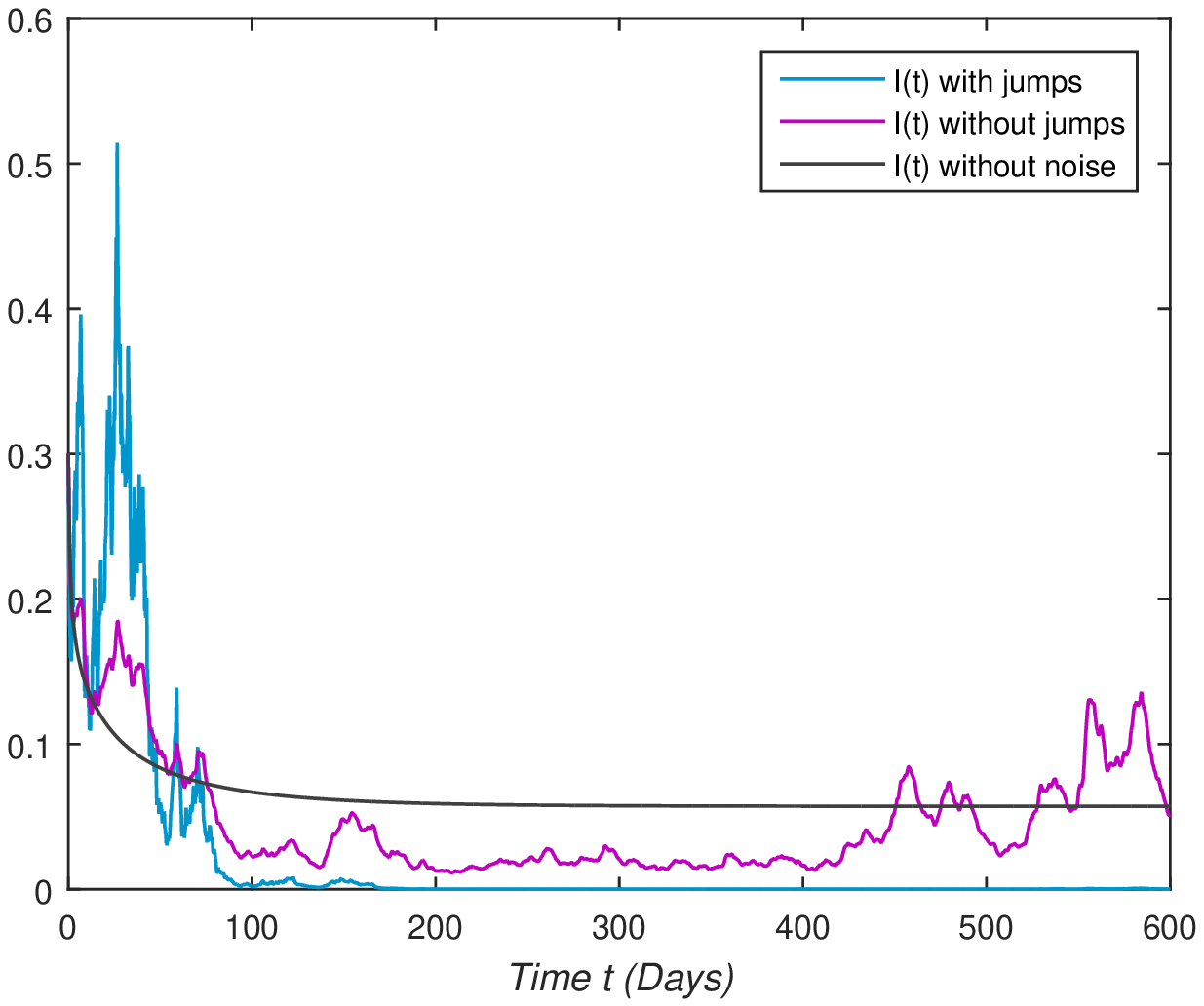}
\end{array}$
$\begin{array}{c}
\includegraphics[width=3.4in]{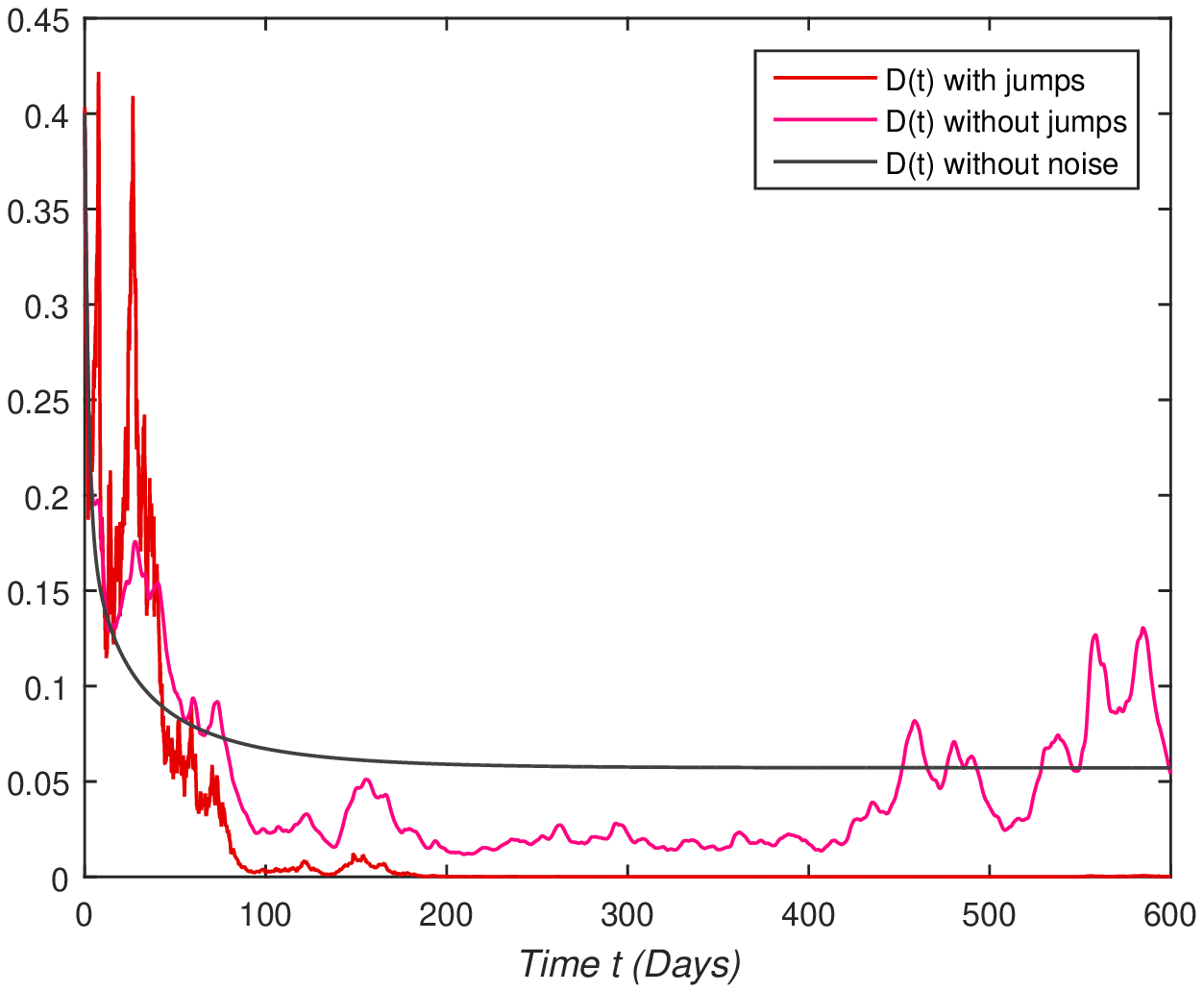}
\end{array}$
\end{center}
\caption{The paths of $S(t)$, $I(t)$ and $\mathcal{D}(t)$ for the deterministic model (\ref{s1d}), the  model (\ref{s12}) with degenerate diffusion, and the SDE-J \eqref{s2}.}
\label{fig3}
\end{figure}
\section{Conclusion and discussion}
Environmental factors and unexpected phenomena have significant impacts on the spread of epidemics. This paper takes into account these two factors. Specifically, we have analyzed a delayed SIR epidemic model that incorporates proportional Lévy jumps. For analytical reasons, we have employed the linear chain approach to \textcolor{black}{transform} the model with a weak kernel case (\ref{s1}) into the equivalent system (\ref{s1d}). After proving the \textcolor{black}{well-posedness} of this perturbed model, we have analyzed its long-term behavior. Under some hypotheses, the main epidemiological findings of our study are presented as follows:
\begin{enumerate}
\item We have given sufficient condition for the extinction of the epidemic. 
\item We have established sufficient condition for the persistence in the mean of the epidemic. 
\end{enumerate}
Compared to the existing literature, the novelty of our work lies in new mathematical analysis techniques and improvements which
are summarized in the following items:
\begin{enumerate}
\item Our work is distinguished from previous works \cite{lev1,lev2,lev3} by the use of the expression $\chi_{1,p}$ which boosts the optimality of our calculus and results.
\item Our study offers an alternative method to the gap mentioned in (Theorem 2.2, \cite{58}). Without using the explicit formula of the distribution stationary $\pi^{\star}(\cdot)$ of $\psi$ (which still up to now unknown), we calculate the following time averages:
 $$\dis\underset{t\to\infty}{\lim}t^{-1}~\int^t_0\psi(s)\textup{d}s\hspace{0.2cm}\mbox{and}\hspace{0.2cm}\dis\underset{t\to\infty}{\lim}t^{-1}~\int^t_0\psi^2(s)\textup{d}s\hspace{0.3cm}\mbox{a.s.}$$
\item In order to find an optimal and good majorization, we have considered the inequality (\ref{optim}) in our analysis without eliminated it (since $\ln(1+x)-x\leq 0$ for all $x>-1$) which differs from the calculus presented in \cite{lev2}.
\end{enumerate}
Generally speaking, our theoretical results indicate that the conditions of extinction and persistence are mainly depending on the magnitude of the noise intensities as well as the system parameters. From numerical simulations, we remark that Lévy jumps affect significantly the long-run behavior of an epidemic. Eventually, we point out that this paper extends the study presented in \cite{da} to the case of Lévy jumps and delivers some new insights for understanding the propagation of diseases with distributed delay. Furthermore, the method developed in this paper can be used to investigate a class of related stochastic models driven by Lévy noise.
\section*{Data Availability}
The theoretical data used to support the findings of this study are included in the article.
\section*{Conflicts of Interest}
On behalf of all authors, the corresponding author states that there is no conflict of interest.
\section*{Funding}
This research did not receive any specific grant from funding agencies in the public, commercial, or not-for-profit sectors.
\section*{Authors' Contributions}
The authors declare that the study was conducted in collaboration with the same responsibility. All authors read and approved the final manuscript.
\begin{figure}[!htb]\centering
\begin{center}$
\begin{array}{cc}
\includegraphics[width=3.4in]{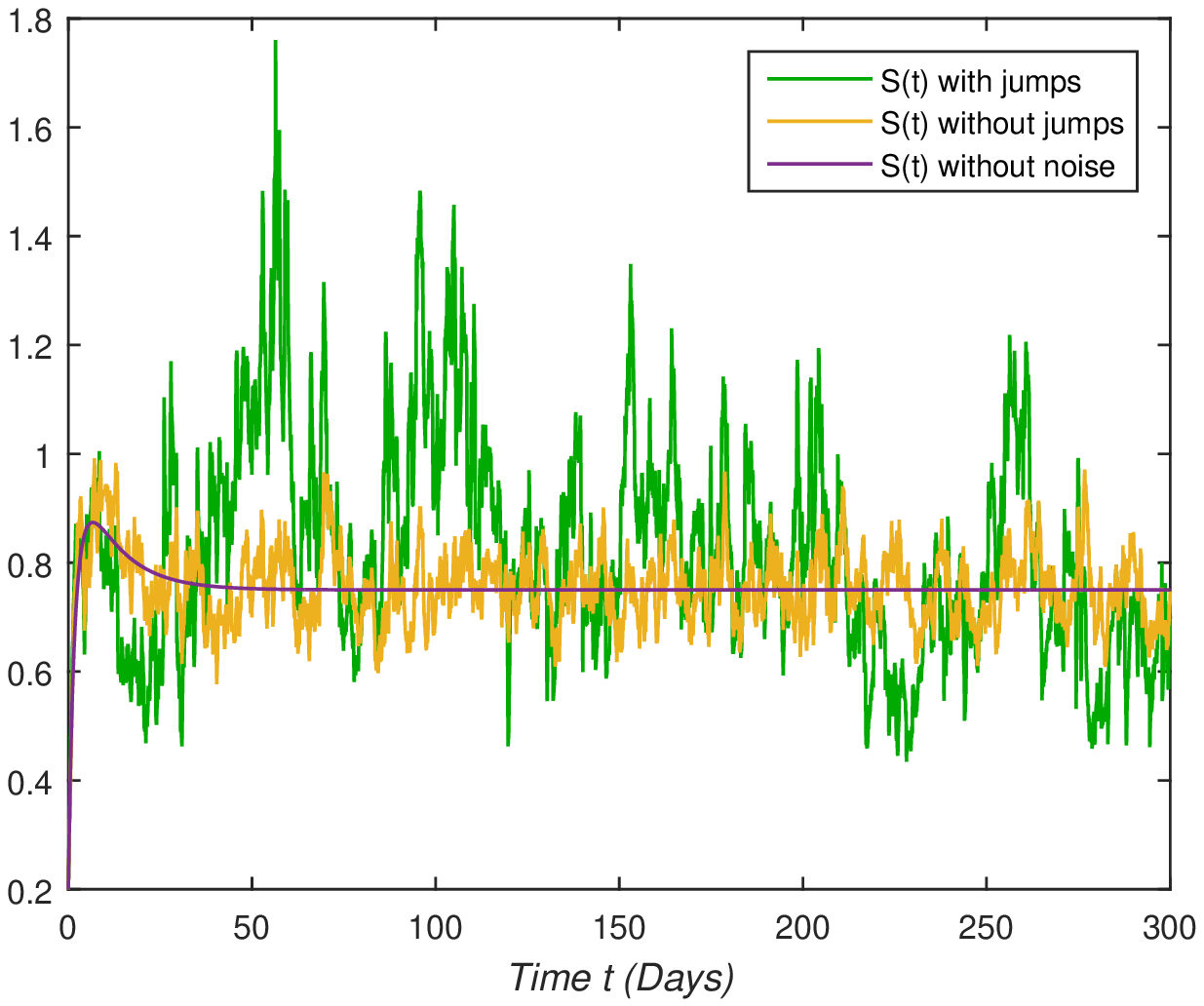} &
\includegraphics[width=3.4in]{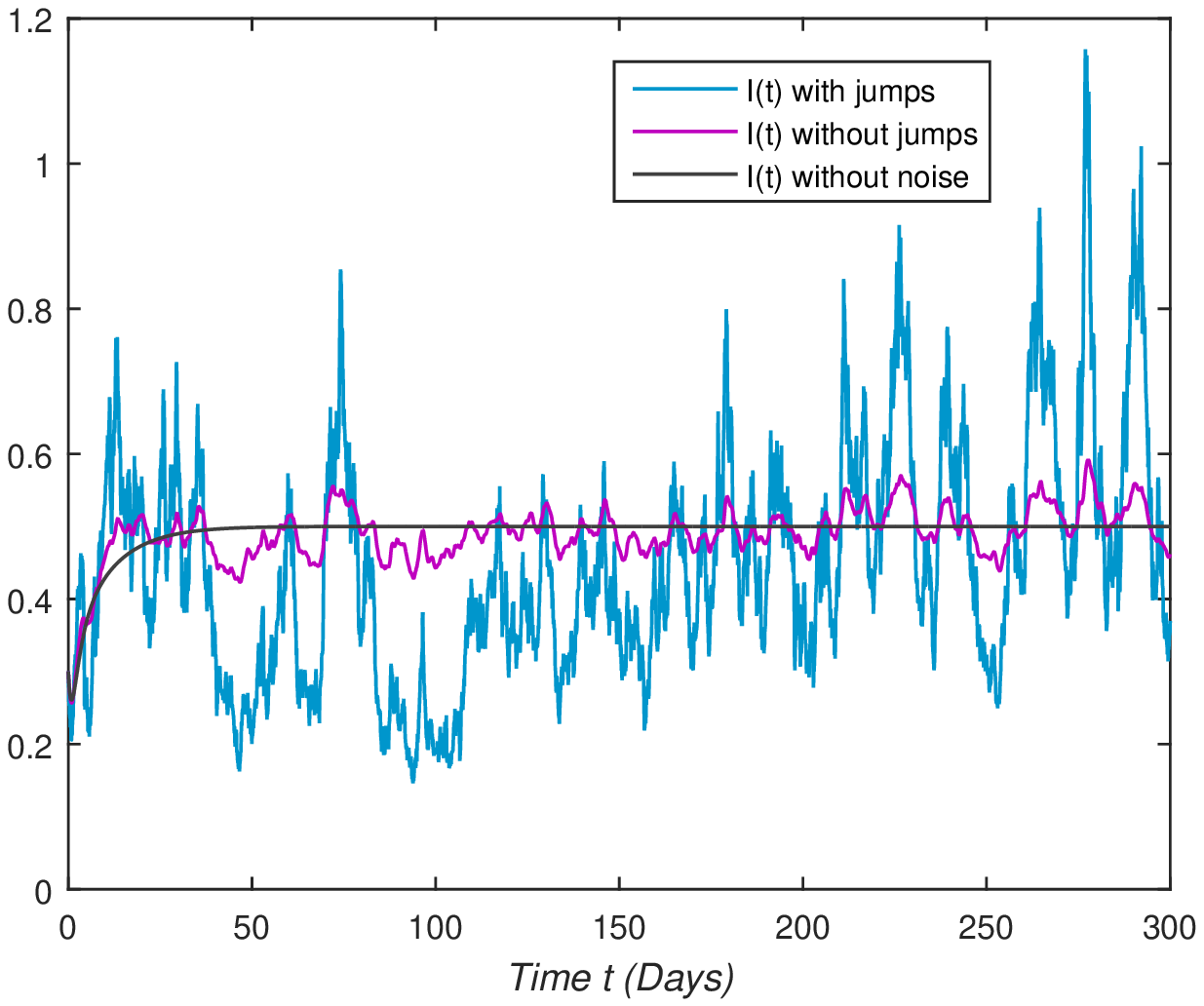}
\end{array}$
$\begin{array}{c}
\includegraphics[width=3.4in]{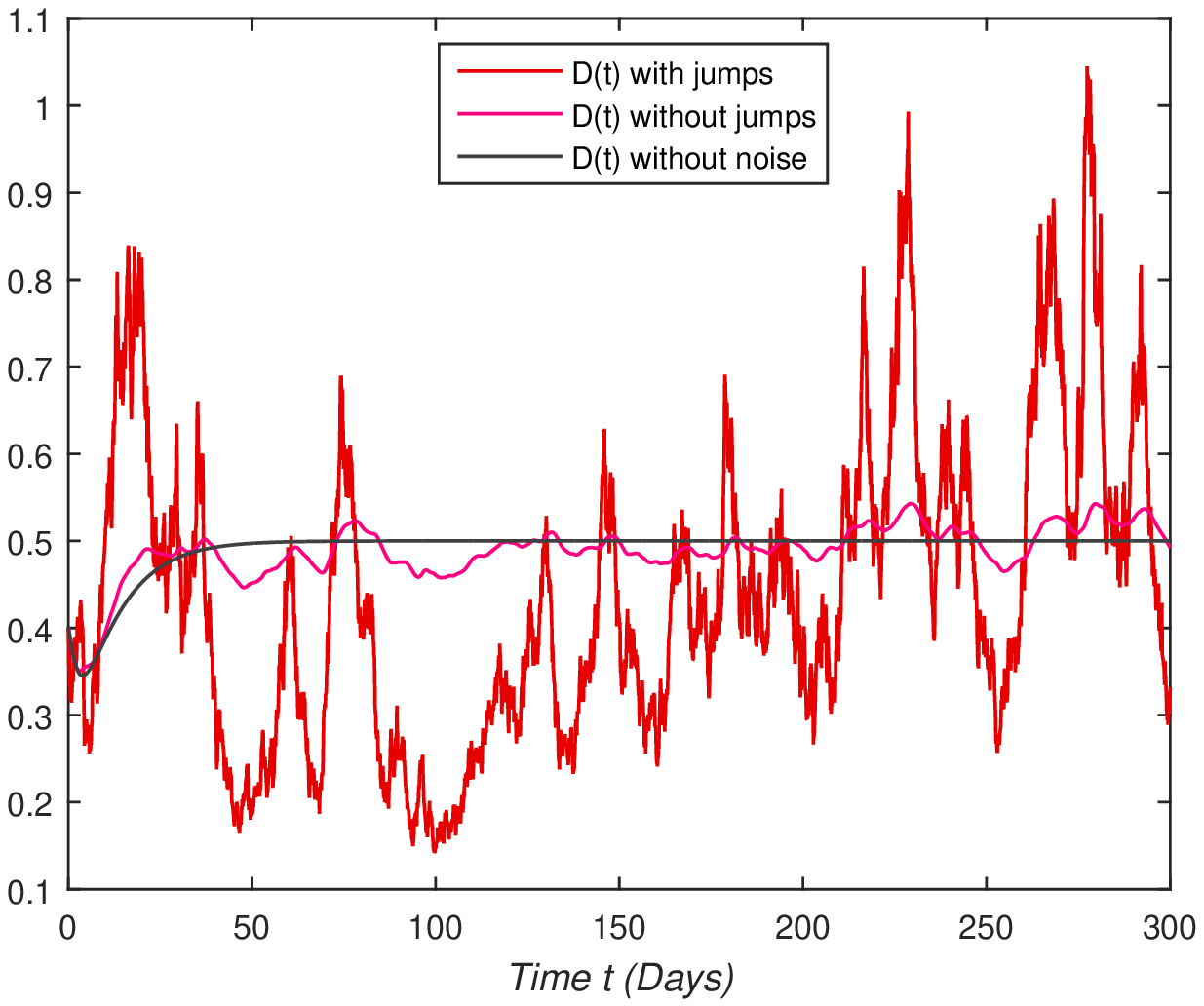}
\end{array}$
\end{center}
\caption{The paths of $S(t)$, $I(t)$ and $\mathcal{D}(t)$ associated respectively to the models (\ref{s1d}),(\ref{s12}) and \eqref{s2}.}
\label{fig4}
\end{figure}
\bibliographystyle{plain}

\bibliography{newbib.bib}
\end{document}